\documentclass[a4paper,12pt,reqno]{amsart}
\usepackage{amsfonts}
\usepackage{amsmath}
\usepackage{amssymb}
\usepackage[a4paper]{geometry}
\usepackage{mathrsfs}
\usepackage{csquotes}
\usepackage{enumitem}
\usepackage{dirtytalk}
\usepackage{comment}
\usepackage{nccmath}

\usepackage[colorlinks]{hyperref}
\renewcommand\eqref[1]{(\ref{#1})} 
\numberwithin{equation}{section}
\theoremstyle{plain}
\newtheorem{thm}{Theorem}[section]
\newtheorem{prop}[thm]{Proposition}
\newtheorem{cor}[thm]{Corollary}
\newtheorem{lem}[thm]{Lemma}

\theoremstyle{definition}
\newtheorem{defn}[thm]{Definition}
\newtheorem{rem}[thm]{Remark}
\newtheorem{ex}[thm]{Example}

\def\e[#1]{{\textrm{e}}^{#1}}

\begin{document}

   \title[General weighted Hardy and Rellich type inequalities]
 {Sharp remainder formulae for general weighted Hardy and Rellich type inequalities for $1<p<\infty$} 

   \author[Y. Shaimerdenov]{Yerkin Shaimerdenov}
\address{
  Yerkin Shaimerdenov:
 \endgraf
   SDU University, Kaskelen, Kazakhstan
  \endgraf
  and 
  \endgraf
  Department of Mathematics: Analysis, Logic and Discrete Mathematics 
  \endgraf
  Ghent University, Belgium
  \endgraf
  and
  \endgraf
  Institute of Mathematics and Mathematical Modeling, Kazakhstan
   \endgraf
  {\it E-mail address} {\rm
yerkin.shaimerdenov@sdu.edu.kz}
  }

\author[N. Yessirkegenov]{Nurgissa Yessirkegenov}
\address{
  Nurgissa Yessirkegenov:
  \endgraf
  KIMEP University, Almaty, Kazakhstan
     \endgraf
  {\it E-mail address} {\rm nurgissa.yessirkegenov@gmail.com}
  }

  \author[A. Zhangirbayev]{Amir Zhangirbayev}
\address{
  Amir Zhangirbayev:
 \endgraf
   SDU University, Kaskelen, Kazakhstan
  \endgraf
  and 
  \endgraf
  Institute of Mathematics and Mathematical Modeling, Kazakhstan
   \endgraf
  {\it E-mail address} {\rm
amir.zhangirbayev@gmail.com}
  }

\thanks{This research is funded by the Committee of Science of the Ministry of Science and Higher Education of the Republic of Kazakhstan (Grant No. AP23490970).}

     \keywords{Hardy inequality, Rellich inequality, sharp remainders}
     \subjclass[2020]{26D10, 35J70}

     \begin{abstract} Inspired by the work of Cossetti and D’Arca \cite{d2025unified}, we show that the general weighted $L^{p}$-Hardy type inequalities \cite[Theorems 1.1 and 1.2]{d2025unified} and the corresponding identities hold for all $1<p<\infty$, thus extending their results beyond the case $p\geq 2$. In addition, we present a general weighted $L^{p}$-Rellich type inequality with a sharp remainder term for quasilinear second order degenerate elliptic differential operators. In particular, even for the classical Laplacian, these identities appear to be new.

     \end{abstract}
     \maketitle   

\section{Introduction}

The study of general weighted Hardy type inequalities dates back to the work of Ghoussoub and Moradifam \cite{ghoussoub2011bessel}, where they obtained a necessary and sufficient condition on a pair of positive radial weights $V(|x|)$ and $W(|x|)$ on a ball $B_{R}$ such that the following inequalities hold for all $u\in C_{0}^{\infty}(B_{R})$:
\begin{align*}
\int_{B_{R}}V(|x|)|\nabla u|^{2}dx\geq\int_{B_{R}}W(|x|)|u|^{2}dx.
\end{align*}
Their results improved, extended and unified many Hardy inequalities previously known at that time (e.g., \cite{caffarelli1984first, brezis1997blow, wang2003caffarelli}). Over the next few years, a huge influx of contributions has been made, see, for instance, \cite{ghoussoub2013functional, lam2019factorizations, duy2020improved, lam2020geometric, duy2022p, do2024new, flynn2025p}. 

Another important work that we would like to mention is by Adimurthi and Sekar \cite{sekar2006role}, which might have been one of the first papers where the role of the fundamental solution was studied in the context of Hardy inequalities for general elliptic operators. In particular, they established the following inequality:
\begin{align}\label{AS06 ineq}
\int_{\Omega} |\nabla u|_{A}^{p} dx-\left( \frac{p-1}{p} \right)^{p}
\int_{\Omega} \frac{|\nabla E|_{A}^{p}}{E^{p}} |u|^{p} dx\geq 0,
\end{align}
where $|\xi|_{A}^{2}=\langle A(x)\xi,\xi \rangle$ for $\xi \in \mathbb{R}^{N}$, $A(x)$ is a $N\times N$ symmetric, uniformly positive definite matrix defined on a bounded domain $\Omega \subset \mathbb{R}^N$ and $E$ is a solution to
\begin{align*}
\begin{cases}
-\operatorname{div}\!\left( |\nabla E|_{A}^{\,p-2}\, A \nabla E \right) = \delta_{0} & \text{in } \Omega, \\[0.2cm]
E = 0 & \text{on } \partial\Omega.
\end{cases}
\end{align*}
Here, $0\in \Omega$ and $\delta_{0}$ is a Dirac mass at $0$. Shortly after, the approach showed to be not only applicable to both compact and non-compact manifolds \cite[Section 4.3, 4.4]{sekar2006role}, but also very useful in other settings \cite{frank2008non, goldstein2018general, KY18, yener2018general, yener2018several, cazacu2021method, shaimerdenov2025sharp, d2025unified}. For instance, in \cite{frank2008non}, Frank and Seiringer found out that ground state representations can also be used to obtain a general $L^{p}$-Hardy type inequality and its improvement with a sharp constant. To be precise, suppose $\phi$ is a positive weak solution of the weighted $p$-Laplace equation
\begin{align}\label{fs cond}
-\operatorname{div}\left(V|\nabla \phi|^{p-2} \nabla \phi\right)=W \phi^{p-1}, \quad V>0 \quad \text{on} \quad \mathbb{R}^N. 
\end{align}
Then, for all complex-valued $u$ with compact support, one has
\begin{align}\label{fs ineq}
\int_{\mathbb{R}^{N}}V|\nabla u|^{p}dx\geq \int_{\mathbb{R}^N}W|u|^{p}dx,
\end{align}
where $\int_{\mathbb{R}^{N}}V|\nabla u|^{p}dx$ and $\int_{\mathbb{R}^N}W_{+}|u|^{p}dx$ are assumed to be finite. Interestingly, compared to Bessel pairs in \cite{ghoussoub2011bessel}, the condition (\ref{fs cond}) allows also for non-radial weights $V$ and $W$. Under the restriction to $p\geq2$, due to the convexity inequality, they also obtained an improvement of (\ref{fs ineq}) with a sharp constant \cite[Formula (2.12)]{frank2008non}. 

Continuing from the results of Adimurthi and Sekar, in \cite[Open problem 4.1]{sekar2006role}, Adimurthi and Sekar asked whether the constant $\left(\frac{p-1}{p}\right)^{p}$ in (\ref{AS06 ineq}) is optimal. This problem was later resolved by Cowan \cite{cowan2010optimal}, whose work initiated a broader and deeper analysis of general Hardy inequalities, including the determination of best constants and possible improvements.

Building on this, Ghoussoub and Moradifam, in their book \cite[Chapter 10]{ghoussoub2013functional}, addressed the possibility of obtaining a similar characterization of Hardy inequalities in terms of Bessel pairs (as in \cite{ghoussoub2011bessel}), but for more general elliptic operators \cite[page 156, Open Problem 10]{ghoussoub2013functional}. This question was partially answered  in the same book \cite{ghoussoub2013functional} and was completely resolved in \cite{ruzhansky2024hardy}. For the applications of Bessel pairs in non-Euclidean settings, we refer to \cite{lam2018hardy, roychowdhury2022hardy, ruzhansky2024hardy, d2024unified, GJR24, nguyen2025hardy, shaimerdenov2025sharp, d2025unified}.

In the very recent work \cite{d2025unified}, Cossetti and D'Arca provided similar results in the form of a distributional solution to the degenerate elliptic $p$-Laplacian equation. Their approach brings together the supersolution technique employed in \cite{sekar2006role, frank2008non, ruzhansky2024hardy, cazacu2021method, shaimerdenov2025sharp} and the method introduced by D’Ambrosio \cite{D'A05}, thereby unifying these tools within a common framework. However, their proofs relied on the fundamental algebraic identities \cite[Proposition 2.1 and 2.3]{d2025unified}, which hold for $p\geq2$. 

In this paper, we demonstrate that the equivalent algebraic identity (\ref{cp-eq}), valid for all $1<p<\infty$, allows us to extend the results of \cite{d2025unified} beyond the case $p\geq 2$ to the entire range $1<p<\infty$. We remark that identity (\ref{cp-eq}) has already proved effective in establishing numerous functional inequalities across a wide range of contexts (see, e.g., \cite{duy2022p, do2023p, do2024scale, cazacu2024hardy, kalaman2024cylindrical, CT24, yessirkegenov2025refined, apseit2025sharp, shaimerdenov2025sharp, ruzhansky2025cylindrical}).

Apart from $L^{p}$-Hardy type inequalities, we also establish $L^{p}$-Rellich type inequality with a sharp remainder term. First, let us provide some historical background. 

In the aforementioned paper \cite{ghoussoub2011bessel}, it was shown that the Bessel pairs (along with some additional conditions) not only give general weighted Hardy inequalities, but also general Hardy-Rellich inequalities of the following form: let $n\geq 4$ and $W(|x|)$ be a radial Bessel potential such that $r\frac{W_{r}(r)}{W(r)}$ decreases to $-\lambda$ and if $\lambda \leq n-2$, then for all $u \in C_{0}^{\infty}(B_{R})$, we have
\begin{multline}\label{rellich gm}
\int_{B}|\Delta u|^{2} d x \geq  \frac{n^{2}(n-4)^{2}}{16} \int_{B} \frac{u^{2}}{|x|^{4}} d x \\
 +\left(\frac{n^{2}}{4}+\frac{(n-\lambda-2)^{2}}{4}\right) \beta(W ; R) \int_{B} \frac{W(|x|)}{|x|^{2}} u^{2} d x.
\end{multline}
By choosing appropriate Bessel pairs on $(0,R)$, the inequality (\ref{rellich gm}) improves in various ways the results of Adimurthi, Grossi and Santra \cite{adimurthi2006optimal} as well as Tertikas and Zographopoulos \cite{tertikas2007best}. In addition, in the same paper \cite{ghoussoub2011bessel}, more general and higher-order versions of (\ref{rellich gm}) were obtained that further improve other results from \cite{tertikas2007best}.

Although the result (\ref{rellich gm}) does give the classical Rellich inequality and its various improvements, here, we are more interested in obtaining a result in the form of Frank and Seiringer (\ref{fs ineq}). In \cite{kombe2017weighted}, Kombe and Yener did exactly that: let $V \in C^2\left(\mathbb{R}^N\right)$ and $W \in L_{loc}^1\left(\mathbb{R}^N\right)$ be nonnegative functions. Suppose that $\phi \in C^\infty\left(\mathbb{R}^N\right)$ is a positive function satisfying the differential inequalities
\begin{align*}
\Delta\left(V\left|\Delta \phi\right|^{p-2} \Delta \phi\right) \geq W \phi^{p-1} \quad \text{and} \quad -\Delta \phi > 0
\end{align*}
a.e. in $\mathbb{R}^N$. Then, there holds
\begin{align}\label{rellich ky}
\int_{\mathbb{R}^N} V\left|\Delta u\right|^p dx \geq \int_{\mathbb{R}^N} W|u|^p dx.
\end{align}
for all real-valued $u\in C_{0}^{\infty}(\mathbb{R}^{N})$. Originally, the result was obtained in a more general setting, namely Baouendi-Grushin (see, Example \ref{ex-grushin}), but for simplicity, we state it in the standard Euclidean one. Later, by a similar technique, Ruzhansky and Sabitbek \cite{ruzhansky2024hardy} extended (\ref{rellich ky}) to all complex-valued $u\in C_{0}^{2}(\Omega)$:
\begin{align}\label{rellich bolys}
\int_{\Omega} V|\Delta| u||^{p} d x \geq \int_{\Omega} W|u|^{p} d x.
\end{align}
Since $|\Delta u|\geq|\Delta |u||$, the inequality (\ref{rellich bolys}) actually gives an improvement to (\ref{rellich ky}).

In this paper, we give a sharp remainder formula for (\ref{rellich ky}) in the setting of general linear second-order differential operators, which include the standard Laplace operator, Baouendi-Grushin, Heisenberg-Greiner and etc.

\section{Preliminaries}

In this section, we aim to provide the necessary notation and facts concerning the general linear second-order differential operator as well as the $C_p$-functional (the key algebraic identity).  

Throughout the paper, we use $\mathcal{I}_{k}$ to denote the identity matrix of order $k$, and $|\cdot|$ to denote the standard Euclidean norm. Let $\sigma:=\left(\sigma_{ij}\right)$, $i=1,\ldots,\ell$, $j=1,\ldots,N$ be an $\ell\times N$ matrix with entries $\sigma_{ij}$ and $\frac{\partial}{\partial x_{j}}\sigma_{ij}$ belonging to $C(\mathbb{R}^N)$. Now define the vector fields $X_{i}$ as
\begin{align*}
X_{i}:=\sum^{N}_{j=1}\sigma_{ij}(x)\frac{\partial}{\partial x_{j}}, \quad i=1,\ldots,\ell.
\end{align*}
and the horizontal gradient $\nabla_{\mathcal{L}}$ as follows
\begin{align*}
\nabla_{\mathcal{L}}:=\sigma\nabla,
\end{align*}
where $\nabla$ is the usual Euclidean gradient in $\mathbb{R}^N$. The assumptions above allow us to naturally define the formal adjoint of $X_{i}$, i.e.
\begin{align*}
X^{*}_{i}:=-\sum_{j=1}^{N}\frac{\partial}{\partial x_{j}}\left(\sigma_{ij}(x) \ \cdot \ \right)
\end{align*}
along with $\nabla^{*}_{\mathcal{L}}:=\left(X^{*}_{1},\ldots,X^{*}_{\ell}\right)$. Now we can define the horizontal divergence 
\begin{align*}
\text{div}_{\mathcal{L}}\left(\cdot\right):=-\nabla^{*}_{\mathcal{L}}\cdot=\text{div}\left(\sigma^{T} \ \cdot \ \right)
\end{align*}
and the linear second-order differential operator
\begin{align*}
\mathcal{L}:=-\nabla^{*}_{\mathcal{L}}\cdot\nabla_{\mathcal{L}}.
\end{align*}
If $1<p<\infty$, we also have the quasilinear second-order degenerate elliptic differential operator defined as
\begin{align*}
\mathcal{L}_{p}u:=-\nabla^{*}_{\mathcal{L}}\cdot\left(|\nabla_{\mathcal{L}}u|^{p-2}\nabla_{\mathcal{L}}u\right).
\end{align*}
The product rule for the horizontal gradient, for two scalar functions $f$ and $g$, is given by
\begin{align*}
\nabla_{\mathcal{L}}(fg)=\nabla_{\mathcal{L}}(f)g+f\nabla_{\mathcal{L}}(g).
\end{align*}
Furthermore, we note the identity below, which will be used in the proof of the $L^{p}$-Rellich identity. 

\begin{prop}\label{prop rellich}
Let $1<p<\infty$ and $\Omega\subseteq \mathbb{R}^N$. Then, for all complex-valued $u$ on $\Omega$ and non-trivial real-valued $\phi\in C^{2}(\Omega)$, we have
\begin{multline}\label{laplace u/phi}
\mathcal{L} \left( \frac{|u|^{p}}{|\phi|^{p-2}\phi} \right) = \frac{1}{|\phi|^{p-2}\phi}\biggl[  \mathcal{L}(|u|^{p})
 - (p-1)\frac{\mathcal{L}\phi}{\phi}\,|u|^{p}
\\ - 2(p-1)\frac{\nabla_{\mathcal{L}}(|u|^{p})\cdot \nabla_{\mathcal{L}}\phi}{\phi}
 + p(p-1)\frac{|\nabla_{\mathcal{L}}\phi|^{2}}{\phi^{2}}\,|u|^{p} \biggr].
\end{multline}   
\end{prop}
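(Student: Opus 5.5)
The plan is to expand the left-hand side of \eqref{laplace u/phi} using two elementary calculus rules for $\mathcal{L}$, applied with $f:=|u|^{p}$ and $g:=\frac{1}{|\phi|^{p-2}\phi}$. Since $\mathcal{L}=-\nabla^{*}_{\mathcal{L}}\cdot\nabla_{\mathcal{L}}=\operatorname{div}(\sigma^{T}\sigma\nabla\,\cdot\,)$ is a second-order operator with no zeroth-order term, I will use:

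\begin{itemize}
\item[(a)] the Leibniz rule
\[
\mathcal{L}(fg)=g\,\mathcal{L}f+f\,\mathcal{L}g+2\,\nabla_{\mathcal{L}}f\cdot\nabla_{\mathcal{L}}g ;
\]
\item[(b)] the chain rule
\[
\mathcal{L}(h(\phi))=h'(\phi)\,\mathcal{L}\phi+h''(\phi)\,|\nabla_{\mathcal{L}}\phi|^{2}.
\]
\end{itemize}

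Both follow at once. One writes $\mathcal{L}(fg)=\operatorname{div}\bigl(\sigma^{T}(g\nabla_{\mathcal{L}}f+f\nabla_{\mathcal{L}}g)\bigr)$, using the product rule for $\nabla_{\mathcal{L}}$ stated above, and then applies the ordinary product rule for $\operatorname{div}$. For (b) one uses $\nabla_{\mathcal{L}}h(\phi)=h'(\phi)\nabla_{\mathcal{L}}\phi$ in the same way.

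Next I compute the derivatives of $h(t)=|t|^{2-p}t^{-1}$ on $\{t\neq0\}$. Since $h(t)=\operatorname{sgn}(t)|t|^{1-p}$, we get
\[
h'(t)=-(p-1)\frac{h(t)}{t},\qquad h''(t)=p(p-1)\frac{h(t)}{t^{2}}.
\]
Hence
\[
\nabla_{\mathcal{L}}g=-(p-1)\,g\,\frac{\nabla_{\mathcal{L}}\phi}{\phi},\qquad
\mathcal{L}g=g\Bigl[-(p-1)\frac{\mathcal{L}\phi}{\phi}+p(p-1)\frac{|\nabla_{\mathcal{L}}\phi|^{2}}{\phi^{2}}\Bigr].
\]

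I then substitute these into (a) with $f=|u|^{p}$:
\begin{itemize}
\item the term $g\,\mathcal{L}(|u|^{p})$ gives the first term in the bracket;
\item the term $|u|^{p}\,\mathcal{L}g$ gives the second and fourth terms;
\item the cross term $2\nabla_{\mathcal{L}}|u|^{p}\cdot\nabla_{\mathcal{L}}g=-2(p-1)\,g\,\frac{\nabla_{\mathcal{L}}(|u|^{p})\cdot\nabla_{\mathcal{L}}\phi}{\phi}$ gives the third term.
\end{itemize}
Factoring out $g=\frac{1}{|\phi|^{p-2}\phi}$ yields exactly \eqref{laplace u/phi}.

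The only delicate point is regularity, and I do not expect a real obstacle there. The identity is meant pointwise on the set where $\phi\neq0$, where $h$ is smooth, and with $|u|^{p}$ regular enough (say $C^{2}$, or in the distributional sense) for $\mathcal{L}(|u|^{p})$ to make sense. I will state these standing assumptions explicitly. The sign bookkeeping for negative $\phi$ is handled automatically by writing $h(t)=\operatorname{sgn}(t)|t|^{1-p}$, whose derivative formulas above are valid on both half-lines.
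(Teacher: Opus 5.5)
Your proposal is correct and is essentially the paper's proof. Both apply the Leibniz rule for $\mathcal{L}$ to $|u|^{p}\,\Phi$ with $\Phi=\frac{1}{|\phi|^{p-2}\phi}$ (your $g$), then substitute $\nabla_{\mathcal{L}}\Phi=-(p-1)\Phi\frac{\nabla_{\mathcal{L}}\phi}{\phi}$ and the formula for $\mathcal{L}\Phi$. The only difference is that the paper gets $\mathcal{L}\Phi$ by applying the product rule to $|\phi|^{p}\Phi=\phi$ and solving, rather than by your chain rule; your direct computation gives the correct coefficient $p(p-1)$ on $|\nabla_{\mathcal{L}}\phi|^{2}/\phi^{2}$, which the paper misprints as $(p-1)$ in one intermediate line.
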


In order to complete this part of the section, we examine some examples of second-order differential operators.

\begin{ex}\label{ex-eucl}
If $\sigma=\mathcal{I}_{N}$, then we get the standard gradient $\nabla_{\mathcal{L}}=\nabla$, Laplacian $\mathcal{L}=\Delta$ and the $p$-Laplacian $\mathcal{L}_{p}=\Delta_{p}$.
\end{ex}

\begin{ex}\label{ex-grushin}
Split $\mathbb{R}^N$ in $z=(x,y)\in\mathbb{R}^{m}\times\mathbb{R}^{k}$ with $m,k\geq1$, $m+k=N$ and $\gamma\geq0$. Setting the matrix as
\begin{align*}
\sigma=\left(
\begin{array}{cc}
\mathcal{I}_n & 0 \\
0 & |x|^{\gamma} \mathcal{I}_k
\end{array}
\right),
\end{align*}
we get the sub-elliptic gradient $\nabla_{\mathcal{L}}=\left(\nabla_{x},|x|^{\gamma}\nabla_{y}\right)$, where $\nabla_{x}$ and $\nabla_{y}$ are the gradients in the variables $x$ and $y$ respectively. The linear operator $\mathcal{L}$ is called the Baouendi-Grushin operator $\mathcal{L}=\Delta_{x}+|x|^{2\gamma}\Delta_{y}$. We note that if $\gamma=0$ and $k=0$, then we recover the Euclidean setting in Example \ref{ex-eucl}.
\end{ex}

\begin{ex}
Let $z=(x,y,t)\in\mathbb{R}^{n}\times\mathbb{R}^{n}\times\mathbb{R}$. Then, for $|z|:=|(x,y)|$ and $\gamma\geq 1$, we set
\begin{align*}
\sigma=
\begin{pmatrix}
\mathcal{I}_n & 0 & 2\gamma y |z|^{2\gamma - 2} \\
0 & \mathcal{I}_n & -2\gamma x |z|^{2\gamma - 2}
\end{pmatrix}.
\end{align*}
This gives the vector fields:
\begin{align*}
X_i =\frac{\partial}{\partial x_i} + 2\gamma y_i |z|^{2\gamma - 2} \frac{\partial}{\partial t},
\qquad
Y_i = \frac{\partial}{\partial y_i} - 2\gamma x_i |z|^{2\gamma - 2} \frac{\partial}{\partial t},
\qquad i=1,\ldots,n.
\end{align*}
In the special cases, if $\gamma=1$, then $\mathcal{L}$ coincides with the sub-Laplacian on the Heisenberg group $\mathbb{H}^{n}$. For $\gamma>1$, the second-order differential operator $\mathcal{L}$ becomes the Greiner operator (see, \cite{greiner1979fundamental}). 
\end{ex}

Moving to the next part of this section, we introduce the definition of the $C_p$-functional as an algebraic identity. 

\begin{defn}\label{cp-def}
Let $1<p<\infty$. Then, for $\xi,\eta\in\mathbb{C}^{\ell}$, we define
\begin{align}\label{cp-eq}
C_p(\xi,\eta):=|\xi|^p-|\xi-\eta|^p-p|\xi-\eta|^{p-2}\textnormal{Re}(\xi-\eta)\cdot\overline{\eta}\geq0.
\end{align}
\end{defn}

Here, we note that \cite[Propositions 2.1 and 2.2]{d2025unified} can be described in terms of the $C_p$-functional. Moreover, the identity (\ref{cp-eq}) contains both \cite[Propositions 2.1 and 2.2]{d2025unified} as it already holds true for all complex-valued $\xi,\eta\in \mathbb{C}$ (i.e., $\ell=1$) and general complex vectors $\xi,\eta\in \mathbb{C}^{\ell}$.

\begin{lem}[\text{\cite[Step 3 of Proof of Theorem 1.2]{cazacu2024hardy}}]\label{lem1}
Let $p\geq2$. Then, for $\xi,\eta\in\mathbb{C}^{\ell}$, we have
\begin{align*}
C_{p}(\xi,\eta)\geq c_1(p)|\eta|^{p},
\end{align*}
where 
\begin{align}\label{c1 const}
c_1(p)
= \inf_{(s,t)\in\mathbb{R}^2\setminus\{(0,0)\}}
\frac{\bigl[t^2 + s^2 + 2s + 1\bigr]^{\frac p2} -1-ps}
{\bigl[t^2 + s^2\bigr]^{\frac p2}}\in(0,1].
\end{align}
\end{lem}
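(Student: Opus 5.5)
The plan is to exploit the homogeneity of $C_p$ and reduce to a two-dimensional real minimization. If $\eta=0$ then $C_p(\xi,0)=0=c_1(p)|\eta|^p$, so the inequality is trivial. Assume $\eta\neq0$ and set $a:=\xi-\eta$. In these variables
\[
C_p(\xi,\eta)=|a+\eta|^p-|a|^p-p|a|^{p-2}\textnormal{Re}\,a\cdot\overline{\eta}.
\]
Both sides scale like $\lambda^p$ under $(\xi,\eta)\mapsto(\lambda\xi,\lambda\eta)$ with $\lambda>0$. They are also invariant under multiplying $\xi$ and $\eta$ by a common unimodular complex number, and under a common unitary change of coordinates in $\mathbb{C}^{\ell}$.

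Consider first the case $a\neq0$, and normalize $|a|=1$. Write $\eta=s\,a+w$, where $s:=\textnormal{Re}\,(\eta\cdot\overline a)$ is real and $w$ collects everything else, namely $\textnormal{Re}\,w\cdot\overline a=0$. This is the orthogonal decomposition in $\mathbb{C}^{\ell}\cong\mathbb{R}^{2\ell}$ with the real inner product $\textnormal{Re}\,(\cdot\cdot\overline{\cdot})$. Put $t:=|w|$. Then $|\eta|^2=s^2+t^2$ and $|a+\eta|^2=(1+s)^2+t^2=t^2+s^2+2s+1$. Also $\textnormal{Re}\,a\cdot\overline\eta=s$, since $\textnormal{Re}\,(a\cdot\overline\eta)=\textnormal{Re}\,(\eta\cdot\overline a)$. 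Therefore
\[
\frac{C_p(\xi,\eta)}{|\eta|^p}=\frac{\bigl[t^2+s^2+2s+1\bigr]^{\frac p2}-1-ps}{\bigl[t^2+s^2\bigr]^{\frac p2}},
\]
and by the definition \eqref{c1 const} this is at least $c_1(p)$. If $a=0$, then $\xi=\eta$ and $C_p(\eta,\eta)=|\eta|^p\ge c_1(p)|\eta|^p$, provided $c_1(p)\le1$.

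It remains to justify $c_1(p)\in(0,1]$. For the upper bound, let $s\to\infty$ with $t=0$: the quotient tends to $1$, so $c_1(p)\le1$. Positivity is the main obstacle. Call the quotient $F(s,t)$. It is continuous on $\mathbb{R}^2\setminus\{0\}$ and strictly positive there, because $r\mapsto r^{p/2}$ composed with $|1+\eta|^2$ is strictly convex: $C_p>0$ unless $\eta=0$, which is the strict tangent-line inequality for $x\mapsto|x|^p$. So it suffices to control $F$ near the origin and at infinity.

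At infinity, $F\to1$ uniformly in the direction $(s,t)/|(s,t)|$, since the numerator is $|(s,t)|^p+O(|(s,t)|^{p-1})$. Near the origin, a Taylor expansion gives a numerator of the form $\tfrac p2\bigl(t^2+(p-1)s^2\bigr)+O(|(s,t)|^3)$. Dividing by $|(s,t)|^p$ with $p\ge2$, this behaves like $|(s,t)|^{2-p}\cdot\tfrac p2\min(1,p-1)$, which is bounded below by a positive constant and in fact tends to $+\infty$ when $p>2$. This is exactly where $p\ge2$ is used; for $p<2$ the ratio near the origin would tend to $0$.

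On the remaining compact annulus, $F$ attains a positive minimum. Combining the three regions gives $\inf F>0$, which completes the argument.
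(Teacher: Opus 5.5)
Your proof is correct and complete. The normalization $|\xi-\eta|=1$ together with the real-orthogonal splitting $\eta=s(\xi-\eta)+w$, $t=|w|$, turns $C_p(\xi,\eta)/|\eta|^p$ exactly into the quotient defining $c_1(p)$. Your argument for $c_1(p)\in(0,1]$ (strict convexity on $\mathbb{R}^2\setminus\{0\}$, the Taylor bound near the origin where $p\geq 2$ enters, uniform convergence to $1$ at infinity, and compactness on the annulus) is a valid justification. The paper gives no proof of its own and simply imports the lemma from Cazacu et al.; the form of $c_1(p)$ reflects precisely the two-variable reduction you carry out, so your route is essentially the intended one.
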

\begin{lem}[\text{\cite[Lemma 2.2]{CT24}}]\label{lem2}
Let $1<p<2\leq \ell$. Then, for $\xi,\eta\in \mathbb{C}^{\ell}$, we have
\begin{align*}
C_p(\xi, \eta) \geq c_2(p) \frac{|\eta|^2}{\left( |\xi| + |\xi - \eta| \right)^{2-p}},
\end{align*}
where
\begin{align}\label{c2 const}
c_2(p) := \inf_{s^2 + t^2 > 0} \frac{\left( t^2 + s^2 + 2s + 1 \right)^{\frac{p}{2}} - 1 - ps}{\left( \sqrt{t^2 + s^2 + 2s + 1} + 1 \right)^{p-2} (t^2 + s^2)} \in \left(0,  \frac{p(p-1)}{2^{p-1}} \right].
\end{align}
\end{lem}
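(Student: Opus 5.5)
The plan is to use homogeneity to reduce the inequality to the two-variable quotient that defines $c_2(p)$ in \eqref{c2 const}. The inequality then follows from the definition of the infimum. What remains is to check that $c_2(p)$ lies in the stated interval $\bigl(0,\frac{p(p-1)}{2^{p-1}}\bigr]$, which needs a local expansion at the origin and an asymptotic argument at infinity.

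\emph{Degenerate case.} First treat $\xi=\eta$, i.e.\ $\xi-\eta=0$. By \eqref{cp-eq}, $C_p(\xi,\eta)=|\xi|^p=|\eta|^p$. The right-hand side equals $c_2(p)|\eta|^2/|\eta|^{2-p}=c_2(p)|\eta|^p$. For $1<p<2$, the function $p\mapsto p(p-1)2^{1-p}$ is increasing and equals $1$ at $p=2$, so $c_2(p)\le p(p-1)2^{1-p}\le 1$ and the claim holds here, once the upper bound on $c_2(p)$ has been established below.

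\emph{Reduction to the quotient.} Now let $a:=\xi-\eta\neq0$. Both sides of the inequality are positively homogeneous of degree $p$ under $(\xi,\eta)\mapsto(\lambda\xi,\lambda\eta)$ with $\lambda>0$. Indeed, the right-hand side scales as $\lambda^2\cdot\lambda^{p-2}$. Hence we may assume $|a|=1$. Identify $\mathbb{C}^\ell$ with $\mathbb{R}^{2\ell}$ carrying the real inner product $\textnormal{Re}(z\cdot\overline{w})$, and decompose $\eta$ along $a$ and its orthogonal complement. Set
\[
s:=\textnormal{Re}(a\cdot\overline{\eta}),\qquad t:=\sqrt{|\eta|^2-s^2}\ge0 .
\]
Then $|\eta|^2=s^2+t^2$ and $|\xi|^2=|a+\eta|^2=t^2+s^2+2s+1$. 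Substituting gives
\[
C_p(\xi,\eta)=(t^2+s^2+2s+1)^{p/2}-1-ps ,
\]
and the right-hand side of the lemma becomes
\[
c_2(p)\,\frac{s^2+t^2}{\bigl(\sqrt{t^2+s^2+2s+1}+1\bigr)^{2-p}} .
\]
Since $(s,t)\neq(0,0)$ when $\eta\ne0$, the ratio of these two expressions is exactly the quotient in \eqref{c2 const}, so the inequality follows from the definition of the infimum. The case $\eta=0$ is trivial. The hypothesis $\ell\ge2$ only guarantees that every pair $(s,t)$ is realized, so that the infimum over $\mathbb{R}^2$ is the right constant; it is not needed for the inequality itself.

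\emph{The main obstacle: $c_2(p)\in\bigl(0,\frac{p(p-1)}{2^{p-1}}\bigr]$.} Denote the quotient by $Q(s,t)$. I would argue in three regions.
\begin{itemize}
\item \emph{Away from $0$ and $\infty$.} The numerator is strictly positive for $(s,t)\neq(0,0)$. This is strict convexity of $|\cdot|^p$: $|a+\eta|^p>|a|^p+p\,\textnormal{Re}(a\cdot\overline{\eta})$. The denominator is positive and $Q$ is continuous there.
\item \emph{Near the origin.} Expand $(1+u)^{p/2}$ with $u=2s+s^2+t^2$. This gives numerator $=\frac p2t^2+\frac{p(p-1)}2s^2+o(s^2+t^2)$ and denominator $=2^{p-2}(s^2+t^2)(1+o(1))$. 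Hence
\[
\liminf_{(s,t)\to0}Q\ \ge\ \frac{p(p-1)}{2^{p-1}}>0 ,
\]
since $p-1<1$. Along $t=0$ the limit is exactly this value, which gives the upper bound on $c_2(p)$.
\item \emph{At infinity.} With $r^2=s^2+t^2\to\infty$, the numerator is $\sim r^p$ and the denominator is $\sim r^{p-2}r^2$, so $Q\to1$.
\end{itemize}
A compactness argument on an annulus, combined with these two limits, then gives $c_2(p)>0$.

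The delicate point is the second-order expansion near the origin. The cancellation of the linear term $ps$ must be handled uniformly in the direction of $(s,t)$, so I would write the Taylor remainder in integral form to control it.
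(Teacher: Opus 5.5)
Your proof is correct. The paper gives no proof of this lemma; it only imports it from \cite[Lemma 2.2]{CT24}, so there is no in-paper argument to compare against.

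Your argument is the natural self-contained one. Scaling to $|\xi-\eta|=1$ and splitting $\eta$ along $\xi-\eta$ in $\mathbb{R}^{2\ell}$ shows that the inequality itself is a tautology once $c_2(p)$ is defined as the infimum of the normalized quotient. The real content is therefore the range of $c_2(p)$, and you establish it correctly:
\begin{itemize}
\item At the origin, the second-order Taylor expansion gives the direction-dependent limit $2^{2-p}\bigl(\tfrac{p(p-1)}{2}s^2+\tfrac{p}{2}t^2\bigr)/(s^2+t^2)$. Its minimum $p(p-1)2^{1-p}$ is attained along $t=0$, which gives both the lower bound near $0$ and the upper bound on $c_2(p)$.
\item At infinity, the quotient tends to $1$ uniformly in direction, since $|s|\le r$ and $p>1$.
\item On the annulus, strict convexity of $|\cdot|^p$ together with compactness gives a positive minimum.
\end{itemize}

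Two minor remarks. The degenerate case $\xi=\eta$ uses the standard convention $|z|^{p-2}z=0$ at $z=0$, which is the continuous extension for $p>1$; you might state this explicitly. Also, for complex vectors every pair $(s,t)$ is already realized when $\ell=1$, since $\mathbb{C}\cong\mathbb{R}^2$. So $\ell\ge 2$ matters only in the real setting, and, as you note, it plays no role in the inequality itself.
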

\begin{lem}[\text{\cite[Lemma 2.3]{CT24}}]\label{lem3}
Let $1<p<2\leq \ell$. Then, for $\xi,\eta\in \mathbb{C}^{\ell}$, we have
\begin{align*}
C_p(\xi, \eta) \leq c_3(p) \frac{|\eta|^2}{\left( |\xi| + |\xi - \eta| \right)^{2-p}},
\end{align*}
where
\begin{align}\label{c3 const}
c_3(p) := \sup_{s^2 + t^2 > 0} \frac{\left( t^2 + s^2 + 2s + 1 \right)^{\frac{p}{2}} - 1 - ps}{\left( \sqrt{t^2 + s^2 + 2s + 1} + 1 \right)^{p-2} (t^2 + s^2)} \in \left[ \frac{p}{2^{p-1}}, +\infty \right).
\end{align}
\end{lem}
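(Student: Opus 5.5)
The plan is to reduce the inequality, by homogeneity and a suitable choice of coordinates, to exactly the two-variable quotient appearing in the definition \eqref{c3 const} of $c_3(p)$. Once that is done the estimate is tautological, apart from two degenerate configurations and a check that $c_3(p)$ is finite and at least $p/2^{p-1}$.

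\textbf{Reduction.} Put $a:=\xi-\eta$.
\begin{itemize}
\item If $\eta=0$, both sides vanish, since $C_p(\xi,0)=|\xi|^p-|\xi|^p=0$.
\item If $a=0$, then $\xi=\eta$ and $C_p(\eta,\eta)=|\eta|^p$, while the right-hand side equals $c_3(p)|\eta|^2/|\eta|^{2-p}=c_3(p)|\eta|^p$. So this case needs $c_3(p)\ge 1$, checked below.
\item If $a\neq 0$ and $\eta\neq0$, define
\begin{align*}
s:=\frac{\textnormal{Re}\, a\cdot\overline{\eta}}{|a|^2},\qquad t^2:=\frac{|\eta|^2}{|a|^2}-s^2\ge 0 .
\end{align*}
Nonnegativity of $t^2$ follows from Cauchy--Schwarz, and $s^2+t^2=|\eta|^2/|a|^2>0$. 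Expanding $|\xi|^2=|a+\eta|^2=|a|^2+2\textnormal{Re}\, a\cdot\overline\eta+|\eta|^2$ gives $|\xi|^2/|a|^2=t^2+s^2+2s+1$.
\end{itemize}
In the last case, dividing everything by $|a|^p$ (both sides are positively homogeneous of degree $p$ in $(\xi,\eta)$) turns the claim into
\begin{align*}
\left(t^2+s^2+2s+1\right)^{\frac p2}-1-ps\le c_3(p)\,\frac{t^2+s^2}{\left(\sqrt{t^2+s^2+2s+1}+1\right)^{2-p}}.
\end{align*}
Since $(\cdot)^{2-p}$ in the denominator equals $(\cdot)^{p-2}$ in the numerator, this is precisely the statement that the quotient in \eqref{c3 const} at the point $(s,t)$ is at most its supremum. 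Here the hypothesis $\ell\ge2$ only guarantees that every pair $(s,t)$ is realised, which the upper bound does not even need.

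\textbf{Properties of $c_3(p)$.} Denote the quotient in \eqref{c3 const} by $F(s,t)$; it is continuous on $\{s^2+t^2>0\}$.
\begin{itemize}
\item \emph{Near the origin.} A second-order Taylor expansion of $(1+2s+s^2+t^2)^{p/2}$ gives numerator $=\tfrac p2(s^2+t^2)+\tfrac{p(p-2)}{2}s^2+O(|(s,t)|^3)$, while the denominator is $\approx 2^{p-2}(s^2+t^2)$. So $F$ stays bounded there. Along $s=0$, $t\to0$ the limit is $\tfrac p2\cdot 2^{2-p}=p/2^{p-1}$, which gives the lower bound $c_3(p)\ge p/2^{p-1}$.
\item \emph{At infinity.} With $r=\sqrt{s^2+t^2}\to\infty$ the numerator behaves like $r^p$ and the denominator like $r^{p-2}r^2$, so $F\to1$. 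This gives $c_3(p)\ge1$, which covers the case $a=0$.
\item \emph{At the point $(s,t)=(-1,0)$}, where the square root vanishes, $F$ equals $(p-1)/1$, which is finite.
\end{itemize}
Combining boundedness near $0$, the finite limit at infinity and continuity elsewhere gives $c_3(p)<\infty$.

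The only step requiring real care is this finiteness of the supremum near the origin, which I expect to be the main obstacle. The point is that the numerator vanishes to second order, and one must check that the $s$-direction coefficient $\tfrac p2+\tfrac{p(p-2)}2=\tfrac{p(p-1)}2$ and the $t$-direction coefficient $\tfrac p2$ are both finite. Both are, so the quotient tends to a bounded, direction-dependent limit.
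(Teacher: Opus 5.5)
Your proof is correct and complete. The paper does not prove this lemma itself; it quotes it from \cite[Lemma 2.3]{CT24}. The form of $c_3(p)$ shows that the lemma is exactly the inequality obtained after normalising $|\xi-\eta|=1$ and rotating. Your invariants $s=\textnormal{Re}(a\cdot\overline{\eta})/|a|^2$ and $t^2=|\eta|^2/|a|^2-s^2$ carry out that reduction without choosing a unitary change of coordinates, so this is the intended argument.

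Your verification of $c_3(p)\in[p/2^{p-1},\infty)$ is also right. As $(s,t)=r(\cos\theta,\sin\theta)\to 0$, the quotient tends to $\bigl(\tfrac{p(p-1)}{2}\cos^2\theta+\tfrac p2\sin^2\theta\bigr)/2^{p-2}$, whose maximum over $\theta$ is $p/2^{p-1}$. At infinity the quotient tends to $1$, and continuity covers everything in between. You are also right that $\ell\ge 2$ plays no role in the upper bound.

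Two cosmetic remarks:
\begin{enumerate}
\item For $\xi=\eta$ you implicitly read $|\xi-\eta|^{p-2}(\xi-\eta)$ as $0$, its continuous extension since $p>1$. For $\xi=\eta=0$ the right-hand side is $0/0$ and has to be taken as $0$ by convention.
\item Since $\ln p-(p-1)\ln 2$ is concave and vanishes at $p=1$ and $p=2$, you have $p/2^{p-1}\ge 1$ on $(1,2)$. So the bound $c_3(p)\ge 1$ needed in the case $\xi=\eta$ already follows from the behaviour at the origin, and the separate limit at infinity is only needed for finiteness.
\end{enumerate}
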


The above lemmata are crucial since they allow us to have the following proposition.

\begin{prop}
$C_p(\xi,\eta)=0$ if and only if $\eta=0$ for $1<p<\infty$ and $\ell \geq 2$.
\end{prop}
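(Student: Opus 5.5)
The proof splits into the trivial direction and the case analysis $p\geq 2$ versus $1<p<2$, using Lemmas \ref{lem1}, \ref{lem2} in the two ranges.

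\emph{The direction $\eta=0\Rightarrow C_p=0$.} If $\eta=0$, then directly from (\ref{cp-eq}) we get $C_p(\xi,0)=|\xi|^p-|\xi|^p-0=0$. This holds for every $\xi$.

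\emph{The case $p\geq 2$.} Suppose $C_p(\xi,\eta)=0$. Lemma \ref{lem1} gives
\[
0=C_p(\xi,\eta)\geq c_1(p)|\eta|^p .
\]
Since $c_1(p)\in(0,1]$ is strictly positive, this forces $|\eta|=0$, i.e. $\eta=0$.

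\emph{The case $1<p<2$.} Here $\ell\geq 2$ is exactly what makes Lemma \ref{lem2} applicable. Suppose $C_p(\xi,\eta)=0$ and, for contradiction, $\eta\neq 0$. Then $\xi$ and $\xi-\eta$ cannot both vanish, so $|\xi|+|\xi-\eta|>0$ and the right-hand side of Lemma \ref{lem2} is well defined and finite. Lemma \ref{lem2} then yields
\[
0=C_p(\xi,\eta)\geq c_2(p)\,\frac{|\eta|^2}{(|\xi|+|\xi-\eta|)^{2-p}}>0,
\]
using $c_2(p)>0$. This is a contradiction, so $\eta=0$.

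The only delicate point is this degenerate denominator in Lemma \ref{lem2}, which we avoid by noting that $\eta\neq0$ excludes $\xi=\xi-\eta=0$. Everything else rests on the strict positivity of $c_1(p)$ and $c_2(p)$, which is part of the statements of Lemmas \ref{lem1} and \ref{lem2}.
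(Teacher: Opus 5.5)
Your proof is correct and follows the route the paper intends. The paper gives no separate argument, only the remark that Lemmas~\ref{lem1} and~\ref{lem2} yield the proposition, and your argument (the trivial converse, $c_1(p)>0$ for $p\ge 2$, and $c_2(p)>0$ for $1<p<2\le \ell$, with the check that $\eta\neq 0$ forces $|\xi|+|\xi-\eta|>0$) carries this out completely.
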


\section{Main results}\label{sec: main res}

In this section, using the algebraic identity (\ref{cp-eq}), we show the extended \cite[Theorems 1.1 and 1.2]{d2025unified} to any $1<p<\infty$ and provide a general weighted $L^{p}$-Rellich type inequality. We begin with the extension of \cite[Theorem 1.1]{d2025unified}.

\allowdisplaybreaks

\begin{thm}\label{thm1}
Let $1<p<\infty$, $\Omega \subseteq \mathbb{R}^N$ be an open subset, $Z: \Omega \to \mathbb{R}^{\ell}$, $V(x)\geq 0$ and $W(x)\in L_{loc}^{1}(\Omega)$ such that $V(x)|Z|^{p}\in L_{loc}^{1}(\Omega)$.  Let $\lambda>0$ be a fixed positive constant and assume that there exists a non-trivial real-valued $\phi\in C^{1}(\Omega)$ that solves the following equation in the distributional sense in $\Omega$:
\begin{align}\label{Z eq}
-\textnormal{div}_{\mathcal{L}}\left( V|\nabla_{\mathcal{L}}\phi\cdot Z|^{p-2} \left( \nabla_{\mathcal{L}}\phi \cdot Z \right)Z \right) = 
\lambda W|\phi|^{p-2}\phi.
\end{align}
Then, for all complex-valued $u\in C_{0}^{\infty}(\Omega)$, we have
\begin{align}\label{hardy 1 iden}
\int_{\Omega}V|\nabla_{\mathcal{L}}u\cdot Z|^p dx=\lambda\int_{\Omega}W|u|^{p} dx+\int_{\Omega}C_p\left(V^{\frac{1}{p}}\nabla_{\mathcal{L}}u\cdot Z,V^{\frac{1}{p}}\phi\nabla_{\mathcal{L}}\left(\frac{u}{\phi}\right)\cdot Z\right)dx.
\end{align}
Here, the functions $u=c\phi$ are \say{virtual} extremizers of (\ref{hardy 1 iden}) for all $c\in\mathbb{C}$ and $p\geq 2$.
\end{thm}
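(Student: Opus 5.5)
The identity will follow by unwinding the definition of $C_p$ and using the equation (\ref{Z eq}) tested against $|u|^p/(|\phi|^{p-2}\phi)$. Set
\begin{align*}
\xi:=V^{\frac1p}\nabla_{\mathcal{L}}u\cdot Z,\qquad \eta:=V^{\frac1p}\phi\nabla_{\mathcal{L}}\Big(\frac u\phi\Big)\cdot Z .
\end{align*}
By the product rule for $\nabla_{\mathcal{L}}$ we have $\phi\nabla_{\mathcal{L}}(u/\phi)=\nabla_{\mathcal{L}}u-\frac{u}{\phi}\nabla_{\mathcal{L}}\phi$. Hence
\begin{align*}
\xi-\eta=V^{\frac1p}\frac{u}{\phi}\,\nabla_{\mathcal{L}}\phi\cdot Z .
\end{align*}
Writing $\psi:=\nabla_{\mathcal{L}}\phi\cdot Z$ (a real scalar), Definition \ref{cp-def} gives pointwise
\begin{align*}
C_p(\xi,\eta)=V|\nabla_{\mathcal{L}}u\cdot Z|^p-V\frac{|u|^p}{|\phi|^p}|\psi|^p-pV\frac{|u|^{p-2}}{|\phi|^{p-2}}|\psi|^{p-2}\psi\,\textnormal{Re}\Big(\frac{u}{\phi}\,\overline{\eta}\,V^{-\frac1p}\Big).
\end{align*}

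The second step simplifies the last two terms. Expanding $\eta$ gives
\begin{align*}
\textnormal{Re}\Big(\frac u\phi\overline{\eta}\Big)V^{-\frac1p}=\frac{1}{\phi}\textnormal{Re}(u\,\nabla_{\mathcal{L}}\overline u\cdot Z)-\frac{|u|^2}{\phi^2}\psi=\frac{1}{2\phi}\nabla_{\mathcal{L}}|u|^2\cdot Z-\frac{|u|^2}{\phi^2}\psi .
\end{align*}
Substituting, the $|u|^p|\psi|^p/|\phi|^p$ terms combine with coefficient $-1+p=p-1$. The remaining term becomes $-V|\psi|^{p-2}\psi\,\frac{1}{|\phi|^{p-2}\phi}\nabla_{\mathcal{L}}|u|^p\cdot Z$, using $\frac p2|u|^{p-2}\nabla|u|^2=\nabla|u|^p$. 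On the other hand,
\begin{align*}
\nabla_{\mathcal{L}}\Big(\frac{|u|^p}{|\phi|^{p-2}\phi}\Big)=\frac{\nabla_{\mathcal{L}}|u|^p}{|\phi|^{p-2}\phi}-(p-1)\frac{|u|^p}{|\phi|^{p}}\nabla_{\mathcal{L}}\phi ,
\end{align*}
so the last two terms of $C_p(\xi,\eta)$ are exactly
\begin{align*}
-V|\psi|^{p-2}\psi\,Z\cdot\nabla_{\mathcal{L}}\Big(\frac{|u|^p}{|\phi|^{p-2}\phi}\Big).
\end{align*}
Integrating and using (\ref{Z eq}) in distributional form, with test function $|u|^p/(|\phi|^{p-2}\phi)$ and $\textnormal{div}_{\mathcal{L}}=\textnormal{div}(\sigma^T\cdot)$, this integral equals $-\lambda\int_\Omega W|u|^p\,dx$. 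This yields (\ref{hardy 1 iden}).

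The main obstacle is justifying this test function, since $\phi$ may vanish and $|u|^p$ is only $C^1$ for $1<p<2$. I would first treat the case where $\phi\neq0$ on $\operatorname{supp}u$ (or argue that the integrand is understood where $\phi\ne0$, as in \cite{d2025unified}). In that case $|u|^p/(|\phi|^{p-2}\phi)\in C^1_0$. Testing a distributional equation whose flux $V|\psi|^{p-2}\psi Z$ is in $L^1_{loc}$ against $C^1_c$ functions is legitimate by density, because $V|Z|^p\in L^1_{loc}$ and $\phi\in C^1$. For the general case I would regularize, replacing $|u|^p$ by $(|u|^2+\varepsilon^2)^{p/2}-\varepsilon^p$, and pass to the limit by dominated convergence.

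Finally, for $u=c\phi$ we get $\eta=0$, so $C_p=0$ and equality holds; such $u$ are only \say{virtual} extremizers since $\phi$ need not lie in the energy space. Restricting this remark to $p\ge2$ reflects Lemma \ref{lem1}, where $C_p\ge c_1|\eta|^p$ shows that equality forces $\eta=0$.
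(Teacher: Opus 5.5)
Your proof is correct and follows the paper's own argument. The paper likewise writes $C_p(f,f-g)=|f|^p+(p-1)|g|^p-p\,\textnormal{Re}\,|g|^{p-2}g\overline{f}$ with $g=\xi-\eta=V^{\frac1p}\frac{u}{\phi}\nabla_{\mathcal{L}}\phi\cdot Z$, identifies $p\,\textnormal{Re}\,|g|^{p-2}g\overline{f}$ as $V|\nabla_{\mathcal{L}}\phi\cdot Z|^{p-2}(\nabla_{\mathcal{L}}\phi\cdot Z)\,Z\cdot\nabla_{\mathcal{L}}\big(|u|^p/(|\phi|^{p-2}\phi)\big)+(p-1)|g|^p$, and tests the weak form of (\ref{Z eq}) with $|u|^p/(|\phi|^{p-2}\phi)$; it does this formally, without the admissibility discussion you add. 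On that extra discussion: the $\varepsilon$-regularization of $|u|^p$ is unnecessary, since $|u|^p\in C^1$ for $p>1$, and it cannot handle zeros of $\phi$ on $\operatorname{supp}u$. Those zeros must genuinely be excluded: for $p=2$ and $\phi=\sin x$ on $(0,3\pi)$ with $V=W=Z=\lambda=1$, the remainder is infinite whenever $u(\pi)\neq0$, so your restriction to $\phi\neq0$ is exactly the right setting.
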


\begin{rem}
The extremizers are \say{virtual} in the sense that they make the remainder term vanish, while the integrals appearing in the Hardy identity diverge.
\end{rem}

The same technique allows us to extend the Hardy type inequality in all relevant directions \cite[Theorem 1.2]{d2025unified}.

\begin{thm}\label{thm2}
Let $1<p<\infty$, $\Omega \subseteq \mathbb{R}^N$ be an open subset, $V(x)\geq 0$ and $W(x)$ be two functions in $L_{loc}^{1}(\Omega)$.  Let $\lambda>0$ be a fixed positive constant and assume that there exists a non-trivial real-valued $\phi\in C^{1}(\Omega)$ that solves the following equation in the distributional sense in $\Omega$:
\begin{align*}
-\textnormal{div}_{\mathcal{L}}\left( V|\nabla_{\mathcal{L}}\phi|^{p-2}\nabla_{\mathcal{L}}\phi \right) = \lambda W|\phi|^{p-2}\phi.
\end{align*}
Then, for all complex-valued $u\in C_{0}^{\infty}(\Omega)$, we have
\begin{align}\label{hardy 2 iden}
\int_{\Omega}V|\nabla_{\mathcal{L}}u|^p dx=\lambda\int_{\Omega}W|u|^{p} dx+\int_{\Omega}C_p\left(V^{\frac{1}{p}}\nabla_{\mathcal{L}}u,V^{\frac{1}{p}}\phi\nabla_{\mathcal{L}}\left(\frac{u}{\phi}\right)\right)dx.
\end{align}
Here, the functions $u=c\phi$ are  \say{virtual} extremizers of (\ref{hardy 2 iden}) for all $c\in\mathbb{C}$.
\end{thm}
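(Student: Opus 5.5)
The plan is to expand the definition (\ref{cp-eq}) pointwise and then use the equation for $\phi$, tested against a suitable function, to remove the cross term. Set
\[
\xi:=V^{1/p}\nabla_{\mathcal{L}}u,\qquad \eta:=V^{1/p}\phi\nabla_{\mathcal{L}}(u/\phi).
\]
By the product rule,
\[
\xi-\eta=V^{1/p}\frac{u}{\phi}\nabla_{\mathcal{L}}\phi .
\]
Definition \ref{cp-def} then gives, pointwise wherever $\phi\neq0$,
\[
V|\nabla_{\mathcal{L}}u|^p=C_p(\xi,\eta)+V\frac{|u|^p}{|\phi|^p}|\nabla_{\mathcal{L}}\phi|^p+pV\frac{|u|^{p-2}}{|\phi|^{p-2}}|\nabla_{\mathcal{L}}\phi|^{p-2}\,\mathrm{Re}\Big(\frac{u}{\phi}\nabla_{\mathcal{L}}\phi\cdot\overline{\phi\nabla_{\mathcal{L}}(u/\phi)}\Big).
\]

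\textbf{Identifying the last two terms.} We have
\[
\mathrm{Re}\big(\tfrac{u}{\phi}\,\overline{\phi\nabla_{\mathcal{L}}(u/\phi)}\big)=\tfrac{\phi}{p}\nabla_{\mathcal{L}}|u/\phi|^{p}\,|u/\phi|^{2-p}
\]
(chain rule for $|w|^p$ with $w=u/\phi$). Hence the last two terms combine into
\[
V|\nabla_{\mathcal{L}}\phi|^{p-2}\nabla_{\mathcal{L}}\phi\cdot\Big(|w|^p\nabla_{\mathcal{L}}\phi+\phi\nabla_{\mathcal{L}}|w|^p\Big)=V|\nabla_{\mathcal{L}}\phi|^{p-2}\nabla_{\mathcal{L}}\phi\cdot\nabla_{\mathcal{L}}\Big(\frac{|u|^p}{|\phi|^{p-2}\phi}\,|\phi|^{p-2}\phi\cdot\tfrac{1}{\phi}\cdot\phi\Big).
\]
More simply, this is $V|\nabla_{\mathcal{L}}\phi|^{p-2}\nabla_{\mathcal{L}}\phi\cdot\nabla_{\mathcal{L}}\big(|u|^p/(|\phi|^{p-2}\phi)\big)$, after checking that $\nabla_{\mathcal{L}}(|u|^p\phi^{1-p})$ with the right sign conventions yields exactly these two terms. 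The right bookkeeping is to write $|w|^p\phi=|u|^p/(|\phi|^{p-2}\phi)$ and verify the factor $(p-1)$ cancels correctly. Integrating and using the weak formulation of $-\mathrm{div}_{\mathcal{L}}(V|\nabla_{\mathcal{L}}\phi|^{p-2}\nabla_{\mathcal{L}}\phi)=\lambda W|\phi|^{p-2}\phi$ with test function $|u|^p/(|\phi|^{p-2}\phi)$ gives $\lambda\int W|u|^p$. This yields (\ref{hardy 2 iden}). The argument is identical to the one for Theorem \ref{thm1} with $Z$ dropped, and the $C_p$-functional is defined and valid for all $1<p<\infty$ and complex vectors, which is why no restriction $p\ge2$ appears.

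\textbf{Main obstacle: justification.} The test function $|u|^p/(|\phi|^{p-2}\phi)$ need not be admissible: $\phi$ may vanish, and $|u|^p$ is only $C^1$ when $p>1$, with $\nabla|u|^p=p|u|^{p-2}\mathrm{Re}(\bar u\nabla u)$ still continuous. I would first treat the case $\phi$ nonvanishing on $\mathrm{supp}\,u$, where the test function is $C^1_c$. Distributional solutions with $\phi\in C^1$ and $V|\nabla\phi|^{p-1}\in L^1_{loc}$ accept $C^1_c$ tests by mollification. For the general case, I would regularize via $\phi_\varepsilon$ with $|\phi|$ replaced by $(\phi^2+\varepsilon^2)^{1/2}$, or restrict to where the formula makes sense as in \cite{d2025unified}, and pass to the limit by dominated/Fatou convergence, using $C_p\ge0$.

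\textbf{Extremizers.} For $u=c\phi$ we have $\eta=0$, so $C_p(\xi,0)=0$ and the remainder vanishes; these are the virtual extremizers, since they are not compactly supported. Note that for $1<p<2$ and $\ell\ge2$, the Proposition following Lemma \ref{lem3} gives $C_p=0$ only if $\eta=0$, which characterizes them.
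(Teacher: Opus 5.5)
Your core computation is the paper's proof. Your $\xi$ and $\xi-\eta=V^{1/p}\frac{u}{\phi}\nabla_{\mathcal{L}}\phi$ are the paper's $f$ and $g$, and you recognize the non-$C_p$ terms as $V|\nabla_{\mathcal{L}}\phi|^{p-2}\nabla_{\mathcal{L}}\phi\cdot\nabla_{\mathcal{L}}\big(|u|^p/(|\phi|^{p-2}\phi)\big)$. You get this in one step from $\phi\,|u/\phi|^p=|u|^p/(|\phi|^{p-2}\phi)$. The paper instead expands $C_p(f,f-g)=|f|^p+(p-1)|g|^p-p\,\mathrm{Re}\,|g|^{p-2}g\cdot\overline{f}$ and cancels two $(p-1)|g|^p$ terms. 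Both routes then test the weak formulation with $|u|^p/(|\phi|^{p-2}\phi)$.

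Your admissibility discussion when $\phi\neq0$ near $\operatorname{supp}u$ is correct, and the paper omits it. One slip: $\nabla_{\mathcal{L}}\big(\frac{|u|^p}{|\phi|^{p-2}\phi}\,|\phi|^{p-2}\phi\cdot\frac{1}{\phi}\cdot\phi\big)$ simplifies to $\nabla_{\mathcal{L}}|u|^p$, which is not the right expression. The line after it is the correct one.

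The part that would fail is your proposed limiting argument for general $\phi$. If $\phi$ changes sign, no regularization can rescue the identity, because it is false. Take $N=\ell=1$, $\Omega=(0,2\pi)$, $p=2$, $V=W=\lambda=1$ and $\phi=\sin x$, so $-\phi''=\phi$. Then $C_2(\xi,\eta)=|\eta|^2$, and the remainder $\int_0^{2\pi}|u'-u\cot x|^2\,dx$ is $+\infty$ whenever $u(\pi)\neq0$, while the left side is finite. Even the inequality $\int|u'|^2\geq\int|u|^2$ fails, since the first Dirichlet eigenvalue of $(0,2\pi)$ is $1/4$.

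So the theorem needs $\phi\neq0$ in $\Omega$, which the paper's proof uses tacitly when it divides by $\phi$. Under that hypothesis your first case is the whole proof. The Fatou/dominated-convergence step should be dropped rather than relied on. For one-signed $\phi$ with zeros, it would need genuine integrability estimates near $\{\phi=0\}$, and your sketch does not supply them.
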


\begin{rem}
Since $C_{p}(\cdot,\cdot)\geq 0$, the identities (\ref{hardy 1 iden}) and (\ref{hardy 2 iden}) yield the following general weighted $L^{p}$-Hardy type inequalities:
\begin{align*}
\int_{\Omega}V|\nabla_{\mathcal{L}}u\cdot Z|^{p}dx\geq\lambda\int_{\Omega}W|u|^{p}dx.
\end{align*}
and
\begin{align*}
\int_{\Omega}V|\nabla_{\mathcal{L}}u|^{p}dx\geq\lambda\int_{\Omega}W|u|^{p}dx.
\end{align*}
\end{rem}

The next result concerns with the $L^{p}$-Rellich type identity.

\begin{thm}\label{thm rellich}
Let $1<p<\infty$, $\Omega \subseteq \mathbb{R}^N$ be an open subset, $V(x)\geq 0$ and $W$ be two functions in $L^{1}_{loc}(\Omega)$. Let $\lambda$ be a fixed positive constant and assume that there exists a non-trivial real-valued $\phi\in C^{2}(\Omega)$ with $-\frac{\mathcal{L}\phi}{\phi}\geq0$ that solves the following equation in the distributional sense in $\Omega$:
\begin{align*}
\mathcal{L}\left(V|\mathcal{L}\phi|^{p-2}\mathcal{L}\phi\right) = \lambda W|\phi|^{p-2}\phi.
\end{align*}
Then, for all complex-valued $u\in C_{0}^{\infty}(\Omega)$, we have
\begin{align}\label{rellich 1 iden}
\int_{\Omega} V|\mathcal{L} u|^{p} \, dx  \nonumber
&=\lambda\int_{\Omega}^{}W|u|^{p}dx+\int_{\Omega} C_p\left( V^{\frac{1}{p}}\mathcal{L} u,V^{\frac{1}{p}}\mathcal{L} u - V^{\frac{1}{p}}\frac{\mathcal{L} \phi}{\phi}u \right) dx \nonumber
\\&\quad -  p(p-1) \int_{\Omega} V\frac{|\mathcal{L} \phi|^{p-2}\mathcal{L} \phi}{|\phi|^{p-2}\phi}|u|^{p-2}\left| \nabla_{\mathcal{L}}|u|-\frac{|u|}{\phi}\nabla_{\mathcal{L}} \phi \right| ^{2} dx \nonumber 
\\
& \quad - p \int_{\Omega} V\frac{|\mathcal{L} \phi|^{p-2}\mathcal{L} \phi}{|\phi|^{p-2}\phi}|u|^{p-2}\left( |\nabla_{\mathcal{L}} u|^{2}-|\nabla_{\mathcal{L}} |u||^{2} \right) dx.
\end{align}
Here, the functions $u=c\phi$ are \say{virtual} extremizers of (\ref{rellich 1 iden}) for all $c\in \mathbb{C}$.
\end{thm}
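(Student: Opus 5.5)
The plan is to expand the $C_p$ remainder pointwise from Definition \ref{cp-def}, and then integrate the resulting cross term against the equation for $\phi$ using Proposition \ref{prop rellich}. Set $\xi=V^{\frac1p}\mathcal{L}u$ and $\eta=\xi-V^{\frac1p}\frac{\mathcal{L}\phi}{\phi}u$, so that $\xi-\eta=V^{\frac1p}\frac{\mathcal{L}\phi}{\phi}u$. Substituting into (\ref{cp-eq}) and simplifying gives
\begin{align*}
C_p(\xi,\eta)=V|\mathcal{L}u|^p+(p-1)V\Bigl|\frac{\mathcal{L}\phi}{\phi}\Bigr|^p|u|^p-pV\frac{|\mathcal{L}\phi|^{p-2}\mathcal{L}\phi}{|\phi|^{p-2}\phi}|u|^{p-2}\textnormal{Re}(\bar u\,\mathcal{L}u).
\end{align*}
Hence it suffices to show that $\int_\Omega pV\frac{|\mathcal{L}\phi|^{p-2}\mathcal{L}\phi}{|\phi|^{p-2}\phi}|u|^{p-2}\textnormal{Re}(\bar u\mathcal{L}u)\,dx$ equals $\lambda\int W|u|^p+(p-1)\int V|\frac{\mathcal{L}\phi}{\phi}|^p|u|^p$ minus the two extra integrals appearing in (\ref{rellich 1 iden}).

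First I use the pointwise chain rule for $\mathcal{L}$ (valid where $u\neq0$):
\begin{align*}
\mathcal{L}(|u|^p)=p|u|^{p-2}\textnormal{Re}(\bar u\mathcal{L}u)+p|u|^{p-2}\bigl(|\nabla_{\mathcal{L}}u|^2-|\nabla_{\mathcal{L}}|u||^2\bigr)+p(p-1)|u|^{p-2}|\nabla_{\mathcal{L}}|u||^2 .
\end{align*}
I then solve this for $p|u|^{p-2}\textnormal{Re}(\bar u\mathcal{L}u)$, and in the term $\frac{1}{|\phi|^{p-2}\phi}\mathcal{L}(|u|^p)$ I use Proposition \ref{prop rellich} to replace it by
\begin{align*}
\mathcal{L}\Bigl(\frac{|u|^p}{|\phi|^{p-2}\phi}\Bigr)+\frac{1}{|\phi|^{p-2}\phi}\Bigl[(p-1)\frac{\mathcal{L}\phi}{\phi}|u|^p+2(p-1)\frac{\nabla_{\mathcal{L}}|u|^p\cdot\nabla_{\mathcal{L}}\phi}{\phi}-p(p-1)\frac{|\nabla_{\mathcal{L}}\phi|^2}{\phi^2}|u|^p\Bigr].
\end{align*}
Multiplying by $V|\mathcal{L}\phi|^{p-2}\mathcal{L}\phi$ and integrating, the resulting terms are handled as follows.
\begin{itemize}
\item Since $\mathcal{L}$ is formally self-adjoint, the term containing $\mathcal{L}(\frac{|u|^p}{|\phi|^{p-2}\phi})$, tested against the distributional equation $\mathcal{L}(V|\mathcal{L}\phi|^{p-2}\mathcal{L}\phi)=\lambda W|\phi|^{p-2}\phi$, yields exactly $\lambda\int W|u|^p$.
\item The $(p-1)\frac{\mathcal{L}\phi}{\phi}$ term produces $(p-1)\int V|\frac{\mathcal{L}\phi}{\phi}|^p|u|^p$, which is the term needed above.
\item Using $\nabla_{\mathcal{L}}|u|^p=p|u|^{p-1}\nabla_{\mathcal{L}}|u|$, the three remaining gradient terms combine into a perfect square:
\begin{align*}
-p(p-1)|u|^{p-2}\Bigl[|\nabla_{\mathcal{L}}|u||^2-\frac{2|u|}{\phi}\nabla_{\mathcal{L}}|u|\cdot\nabla_{\mathcal{L}}\phi+\frac{|u|^2}{\phi^2}|\nabla_{\mathcal{L}}\phi|^2\Bigr]=-p(p-1)|u|^{p-2}\Bigl|\nabla_{\mathcal{L}}|u|-\frac{|u|}{\phi}\nabla_{\mathcal{L}}\phi\Bigr|^2 .
\end{align*}
\item The term $p|u|^{p-2}(|\nabla_{\mathcal{L}}u|^2-|\nabla_{\mathcal{L}}|u||^2)$ survives unchanged.
\end{itemize}
Collecting these contributions should give (\ref{rellich 1 iden}) with exactly the stated signs. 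For the extremizers, if $u=c\phi$ then $\mathcal{L}u=\frac{\mathcal{L}\phi}{\phi}u$, so $\eta=0$ and $C_p=0$. Moreover $|u|=|c||\phi|$, so both extra integrands vanish as well: the perfect square is zero, and $|\nabla_{\mathcal{L}}u|=|\nabla_{\mathcal{L}}|u||$.

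The main obstacle is justification rather than algebra. For $1<p<2$ the weight $|u|^{p-2}$ and the function $|u|^p$ are not $C^2$ near the zeros of $u$. Also, $\frac{|u|^p}{|\phi|^{p-2}\phi}$ must be an admissible test function, which requires $\phi\neq0$ on $\textnormal{supp}\,u$. To handle the first issue I would first work with $|u|_\varepsilon=(|u|^2+\varepsilon^2)^{1/2}$ in place of $|u|$ and run the computation above exactly, noting that $|u|_\varepsilon^p-\varepsilon^p$ is compactly supported. I would then let $\varepsilon\to0$. For the two extra integrals, which have nonnegative integrands multiplied by $V\frac{|\mathcal{L}\phi|^{p-2}\mathcal{L}\phi}{|\phi|^{p-2}\phi}=V\bigl|\frac{\mathcal{L}\phi}{\phi}\bigr|^{p-2}\frac{\mathcal{L}\phi}{\phi}\le0$, the hypothesis $-\frac{\mathcal{L}\phi}{\phi}\ge0$ fixes their sign, so I would use Fatou/monotone convergence there. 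For all remaining terms I would use dominated convergence.
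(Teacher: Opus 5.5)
Your proposal is correct and follows essentially the same route as the paper. Like the paper, you expand $C_p$ pointwise, rewrite $p|u|^{p-2}\operatorname{Re}(\bar u\,\mathcal{L}u)$ via the chain rule for $\mathcal{L}(|u|^p)$ and Proposition \ref{prop rellich}, complete the square in the gradient terms, and test the equation for $\phi$ against $|u|^p/(|\phi|^{p-2}\phi)$.

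Your regularization remarks go beyond the paper, which argues only formally. One caveat: Fatou's lemma on the two sign-definite terms only yields the inequality $\geq$, so to get the identity you need dominated convergence there as well. This works, for example, when $V$ is locally bounded, since $(p-1)\int_\Omega |u|_\varepsilon^{p-2}|\nabla_{\mathcal{L}}u|^2\,dx\le\int_\Omega|u|^{p-1}|\mathcal{L}u|\,dx$ for $1<p<2$.
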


\begin{rem}
Likewise, these extremizers are \say{virtual} since they eliminate the remainder terms at the cost of integral divergence.
\end{rem}

\begin{rem}
One can observe that $|\nabla_{\mathcal{L}}u|\geq |\nabla_{\mathcal{L}}|u||$ a.e in $\Omega$. In fact, we can take a non-trivial $u(x) = A(x) + iB(x)$, where \(A=\operatorname{Re} u\) and \(B=\operatorname{Im} u\). Then,
\begin{align*}
|\nabla_{\mathcal{L}}|u||^{2} = \left|\frac{A\nabla_{\mathcal{L}}A+B\nabla_{\mathcal{L}}B}{|u|}\right|^{2}.
\end{align*}
Since $|u|^{2}=A^{2}+B^{2}$, by the Cauchy-Schwarz inequality, we get that
\begin{align*}
|\nabla_{\mathcal{L}}u|\geq|\nabla_{\mathcal{L}}|u||
\end{align*}
a.e. in $\Omega$. We also refer to \cite[Lemma 2.4]{ruzhansky2024hardy} and \cite[Theorem 2.1]{ruzhansky2019weighted2}.
\end{rem}

\begin{rem}
Similarly as for Hardy, since $C_{p}(\cdot,\cdot)\geq0$, $-\frac{\mathcal{L}\phi}{\phi}\geq0$ and $
|\nabla_{\mathcal{L}}u|\geq|\nabla_{\mathcal{L}}|u||$
a.e. in $\Omega$, the identity (\ref{rellich 1 iden}) naturally leads to the general $L^{p}$-Rellich inequality:
\begin{align*}
\int_{\Omega}V|\mathcal{L}u|^{p}dx\geq\lambda\int_{\Omega}W|u|^{p}dx.
\end{align*}
\end{rem}

\section{Applications}

\subsection{Classical implications.} In this section, we present some classical implications of Theorems \ref{thm1}, \ref{thm2} and \ref{thm rellich}. The first of which is the ability to recover sharp remainder formulae of the classical $L^{p}$-Hardy inequality and its improved versions. For simplicity, in (\ref{hardy 1 iden}), let us take
\begin{align*}
\sigma=\mathcal{I}_{N}, \quad V=1, \quad \phi=|x|^{-\frac{N-p}{p}} \text{and} \quad Z=\frac{x}{|x|},
\end{align*}
then, we have the following improved $L^{p}$-Hardy type inequality:

\begin{cor}
Let $1<p<\infty$. Then, for all complex-valued $u\in C_{0}^{\infty}(\mathbb{R}^N \backslash \{0\})$, we have
\begin{multline}\label{improved hardy}
\int_{\mathbb{R}^N}\left|\nabla u \cdot \frac{x}{|x|}\right|^{p}dx = \left(\frac{N-p}{p}\right)^{p}\int_{\mathbb{R}^N}\frac{|u|^{p}}{|x|^{p}}dx \\ + \int_{\mathbb{R}^N}C_{p}\left(\nabla u, |x|^{-\frac{N-p}{p}}\nabla\left(\frac{u}{|x|^{-\frac{N-p}{p}}}\right)\cdot\frac{x}{|x|}\right)dx.
\end{multline}
Moreover, the functions $u=c|x|^{-\frac{N-p}{p}}$ are \say{virtual} extremizers of (\ref{improved hardy}) for all $c\in\mathbb{C}$ and $p\geq2$.
\end{cor}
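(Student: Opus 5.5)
The plan is to derive the corollary directly from Theorem \ref{thm1}. We choose $\Omega=\mathbb{R}^N\setminus\{0\}$, $\sigma=\mathcal{I}_N$ (so $\nabla_{\mathcal{L}}=\nabla$ and $\textnormal{div}_{\mathcal{L}}=\textnormal{div}$ by Example \ref{ex-eucl}), $V\equiv 1$, $Z=x/|x|$, $W=|x|^{-p}$, $\phi=|x|^{-a}$ with $a:=\frac{N-p}{p}$, and $\lambda=|a|^{p}=\left|\frac{N-p}{p}\right|^{p}$. We tacitly assume $N\neq p$, which is needed for $\lambda>0$; for $N>p$ this $\lambda$ is exactly $\left(\frac{N-p}{p}\right)^p$. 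The integrability hypotheses are immediate on $\Omega$: $V|Z|^p\equiv 1$, and $W=|x|^{-p}$ is smooth, hence $L^1_{loc}(\Omega)$. Also $\phi$ is smooth and positive on $\Omega$, so it is a non-trivial real $C^1$ function.

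The main step is to check that $\phi$ solves \eqref{Z eq}. Since $\phi$ is smooth on $\Omega$, a pointwise classical verification implies the distributional one after integrating by parts against test functions supported in $\Omega$. Write $r=|x|$. Then $\nabla\phi\cdot Z=-a\,r^{-a-1}$, so
\begin{align*}
|\nabla\phi\cdot Z|^{p-2}(\nabla\phi\cdot Z)Z=-a|a|^{p-2}\,r^{-(a+1)(p-1)-1}x .
\end{align*}
Here $(a+1)(p-1)=N-\frac{N}{p}$. We then use $\textnormal{div}(r^{m}x)=(N+m)r^{m}$ with $m=-(N-\frac Np)-1$, which gives $N+m=\frac Np-1=a$. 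Hence the left-hand side of \eqref{Z eq} equals $a^{2}|a|^{p-2}r^{m}=|a|^{p}r^{-N+\frac Np-1}$. On the right-hand side, $\lambda W\phi^{p-1}=|a|^{p}r^{-p-a(p-1)}$, and $p+a(p-1)=N-\frac Np+1$. The two sides coincide, so \eqref{Z eq} holds with $\lambda=|a|^p$.

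Theorem \ref{thm1} then yields \eqref{hardy 1 iden} for all $u\in C_0^\infty(\mathbb{R}^N\setminus\{0\})$, which is \eqref{improved hardy}. One correction is needed in reading the statement: the first argument of $C_p$ should be $\nabla u\cdot\frac{x}{|x|}$, as dictated by \eqref{hardy 1 iden}, rather than $\nabla u$. For the virtual extremizers, taking $u=c|x|^{-a}$ makes $\nabla(u/\phi)=0$, so the second argument of $C_p$ vanishes and $C_p(\xi,0)=0$. Meanwhile both integrals in \eqref{improved hardy} diverge at the origin or at infinity, which is the sense of ``virtual'' explained after Theorem \ref{thm1}.

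The only delicate point is the exponent bookkeeping in the divergence computation, together with the sign and absolute value of $a$ when $N<p$. No further obstacle is expected, because all the analytic content is carried by Theorem \ref{thm1}.
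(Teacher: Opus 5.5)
Your proposal is correct and is exactly the paper's argument: the paper obtains the corollary by specializing Theorem~\ref{thm1} to $\sigma=\mathcal{I}_N$, $V=1$, $Z=\frac{x}{|x|}$, $\phi=|x|^{-\frac{N-p}{p}}$, and your explicit check of \eqref{Z eq} with $W=|x|^{-p}$ and $\lambda=\left|\frac{N-p}{p}\right|^{p}$ supplies the computation the paper leaves implicit. You are also right that the first argument of $C_p$ should be $\nabla u\cdot\frac{x}{|x|}$, as dictated by \eqref{hardy 1 iden}, since otherwise the two arguments of $C_p$ would not lie in the same space.
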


Different versions of this kind of identity (\ref{improved hardy}) have already appeared in many different works (see, e.g., \cite{lam2021pbessel, kalaman2024cylindrical, yessirkegenov2025refined}). The same choice of functions $V$ and $\phi$, in (\ref{hardy 2 iden}), gives the following $L^{p}$-Hardy identity for complex-valued functions \cite[Lemma 3.3]{cazacu2024hardy}:

\begin{cor}
Let $1<p<\infty$. Then, for all complex-valued $u\in C_{0}^{\infty}(\mathbb{R}^N\backslash \{0\})$, we have
\begin{align}\label{hardy class}
\int_{\mathbb{R}^N}|\nabla u|^{p}dx=\left(\frac{N-p}{p}\right)^{p}\int_{\mathbb{R}^N}\frac{|u|^{p}}{|x|^{p}}dx + \int_{\mathbb{R}^N}C_{p}\left(\nabla u, |x|^{-\frac{N-p}{p}}\nabla\left(\frac{u}{|x|^{-\frac{N-p}{p}}}\right)\right)dx.
\end{align}
Moreover, the functions $u=c|x|^{-\frac{N-p}{p}}$ are \say{virtual} extremizers of (\ref{hardy class}) for all $c\in \mathbb{C}$.
\end{cor}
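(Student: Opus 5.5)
The plan is to derive the corollary as a direct specialization of Theorem \ref{thm2}. We take $\Omega=\mathbb{R}^N\setminus\{0\}$, $\sigma=\mathcal{I}_N$ (so $\nabla_{\mathcal{L}}=\nabla$ and $\textnormal{div}_{\mathcal{L}}=\textnormal{div}$, as in Example \ref{ex-eucl}), $V\equiv 1$, $\phi=|x|^{-\frac{N-p}{p}}$, $\lambda=\left(\frac{N-p}{p}\right)^{p}$ and $W=|x|^{-p}$. We assume $p\neq N$ so that $\phi$ is non-trivial in the relevant sense; if $p=N$ the constant vanishes and the identity reduces to the trivial case $\phi\equiv1$.

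The hypotheses of Theorem \ref{thm2} are checked as follows. Clearly $V,W\in L^1_{loc}(\Omega)$ and $\phi\in C^1(\Omega)$ is positive. Set $\alpha=\frac{N-p}{p}$. Then $\nabla\phi=-\alpha|x|^{-\alpha-2}x$, and so
\[
|\nabla\phi|^{p-2}\nabla\phi=-|\alpha|^{p-2}\alpha\,|x|^{-(\alpha+1)(p-1)-1}x .
\]
The exponent is $-(\alpha+1)(p-1)-1=-\frac{N(p-1)}{p}-1$. Using $\textnormal{div}(|x|^{\beta}x)=(N+\beta)|x|^{\beta}$, we get
\[
-\Delta_p\phi=|\alpha|^{p-2}\alpha\Bigl(N-\tfrac{N(p-1)}{p}-1\Bigr)|x|^{-\frac{N(p-1)}{p}-1}=|\alpha|^{p-2}\alpha^{2}\,|x|^{-p}\phi^{p-1},
\]
since $N-\frac{N(p-1)}{p}-1=\alpha$ and $\phi^{p-1}|x|^{-p}=|x|^{-\frac{(N-p)(p-1)}{p}-p}$, whose exponent also equals $-\frac{N(p-1)}{p}-1$. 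Because $|\alpha|^{p-2}\alpha^2=|\alpha|^p$, the equation holds pointwise on $\Omega$ with $\lambda=\left|\frac{N-p}{p}\right|^{p}$. A classical pointwise solution on the open set $\Omega$ is also a distributional solution there, since $\phi$ is smooth away from the origin.

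Substituting these choices into \eqref{hardy 2 iden} gives \eqref{hardy class} for all complex-valued $u\in C_0^\infty(\mathbb{R}^N\setminus\{0\})$. For the extremizers, when $u=c\phi$ we have $\nabla(u/\phi)=0$, so the second argument of $C_p$ vanishes. Then $C_p(\xi,0)=|\xi|^p-|\xi|^p-0=0$, and the remainder is zero. Such $u$ is not compactly supported and its integrals diverge, so it is only a virtual extremizer, exactly as in the remark after Theorem \ref{thm1}.

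The only real obstacle is the exponent bookkeeping in the $p$-Laplacian computation. One should also note that the constant appears as $|(N-p)/p|^{p}$; this coincides with the displayed $\left(\frac{N-p}{p}\right)^p$ when $p<N$, which is the intended range.
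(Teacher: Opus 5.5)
Your proposal is correct and follows the paper's route exactly: the paper also obtains the corollary by specializing Theorem \ref{thm2} to $\Omega=\mathbb{R}^N\setminus\{0\}$, $\sigma=\mathcal{I}_N$, $V=1$, $\phi=|x|^{-\frac{N-p}{p}}$. Your explicit check that $-\Delta_p\phi=\left|\frac{N-p}{p}\right|^{p}|x|^{-p}\phi^{p-1}$, which the paper leaves implicit, is right. So are your remarks that the constant must be read as $\left|\frac{N-p}{p}\right|^{p}$ when $p>N$ and that the case $p=N$ is trivial.
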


We also obtain some new identities for the $L^{p}$-Rellich inequality. In particular, by taking
\begin{align*}
\sigma=\mathcal{I}_{N}, \quad  V=1 \quad \text{and} \quad \phi=|x|^{-\alpha}, \quad \text{where} \quad \alpha=\frac{N-2p}{p}, 
\end{align*}
in (\ref{rellich 1 iden}), we obtain the following sharp remainder term of the classical $L^{p}$-Rellich inequality:

\begin{cor}\label{cor rellich classic}
Let $1<p\leq\frac{N}{2}$. Then, for all complex-valued $u\in C_{0}^{\infty}(\mathbb{R}^N\backslash\{0\})$, we have
\begin{multline}\label{rellich identity}
\int_{\mathbb{R}^N}|\Delta u|^{p}dx=C^{p}_{N,p}\int_{\mathbb{R}^N}\frac{|u|^{p}}{|x|^{2p}}dx+\int_{\mathbb{R}^N}C_{p}\left(\Delta u, \Delta u + \frac{C_{N,p}}{|x|^{2}}\right)dx
\\ + p(p-1)C^{p-1}_{N,p}\int_{\mathbb{R}^N}\frac{|u|^{p-2}}{|x|^{2(p-1)}}\left|\nabla |u|+\frac{N-2p}{p}|u|\frac{x}{|x|^{2}}\right|^{2}dx 
\\ + pC^{p-1}_{N,p}\int_{\mathbb{R}^N}\frac{|u|^{p-2}}{|x|^{2(p-1)}}\left(|\nabla u|^{2}-|\nabla |u||^{2}\right)dx,
\end{multline}
where $C_{N,p}=\frac{N(p-1)(N-2p)}{p^{2}}$. Moreover, the functions $u=c|x|^{-\frac{N-2p}{p}}$ are \say{virtual} extremizers of (\ref{rellich identity}) for all $c\in\mathbb{C}$.
\end{cor}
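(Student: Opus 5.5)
The plan is to specialize Theorem \ref{thm rellich} to $\Omega=\mathbb{R}^N\setminus\{0\}$, $\sigma=\mathcal{I}_N$ (so $\mathcal{L}=\Delta$, $\nabla_{\mathcal{L}}=\nabla$), $V\equiv1$ and $\phi=|x|^{-\alpha}$ with $\alpha=\frac{N-2p}{p}$. Since $\phi$ is smooth and positive on $\Omega$, all distributional statements reduce to pointwise computations. For these I will use the radial formula
\begin{align*}
\Delta |x|^{-\gamma}=\gamma(\gamma+2-N)|x|^{-\gamma-2}.
\end{align*}

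\textbf{Step 1: the sign condition.} With $\gamma=\alpha$ we have $\alpha+2-N=-\frac{N(p-1)}{p}$. Hence
\begin{align*}
\Delta\phi=-C_{N,p}|x|^{-\alpha-2}, \qquad -\frac{\Delta\phi}{\phi}=\frac{C_{N,p}}{|x|^{2}}.
\end{align*}
This is nonnegative precisely because $p\le N/2$.

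\textbf{Step 2: the Rellich equation.} First, $|\Delta\phi|^{p-2}\Delta\phi=-C_{N,p}^{p-1}|x|^{-\beta}$ with $\beta=(\alpha+2)(p-1)=\frac{N(p-1)}{p}$. Then $\beta+2-N=-\alpha$, so
\begin{align*}
\Delta\bigl(|\Delta\phi|^{p-2}\Delta\phi\bigr)=C_{N,p}^{p-1}\alpha\beta\,|x|^{-\beta-2}=C_{N,p}^{p}\,|x|^{-\beta-2},
\end{align*}
using $\alpha\beta=C_{N,p}$. Finally, $\beta+2=\alpha(p-1)+2p$, so the right-hand side equals $C_{N,p}^p\,|x|^{-2p}\phi^{p-1}$. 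Thus the hypothesis of Theorem \ref{thm rellich} holds with $W=|x|^{-2p}\in L^1_{loc}(\Omega)$ and $\lambda=C_{N,p}^p$.

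\textbf{Step 3: the endpoint $p=N/2$.} There $C_{N,p}=0$, so $\lambda>0$ fails. I will treat this case directly: all constants vanish, and $C_p(\xi,\xi)=|\xi|^p$, so \eqref{rellich identity} is trivially true.

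\textbf{Step 4: substituting into \eqref{rellich 1 iden}.} For $1<p<N/2$ I substitute the data into \eqref{rellich 1 iden} and simplify each term.
\begin{itemize}
\item The weight in the two last integrals is $\frac{|\Delta\phi|^{p-2}\Delta\phi}{\phi^{p-1}}=-C_{N,p}^{p-1}|x|^{-2(p-1)}$. Its negative sign turns the two subtracted remainder terms into the added ones in \eqref{rellich identity}.
\item Since $\frac{\nabla\phi}{\phi}=-\alpha\frac{x}{|x|^2}$, the gradient term becomes $\nabla|u|+\frac{N-2p}{p}|u|\frac{x}{|x|^2}$.
\item The second argument of $C_p$ becomes $\Delta u+\frac{C_{N,p}}{|x|^2}u$, which is the intended reading of the displayed formula.
\end{itemize}

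\textbf{Step 5: virtual extremizers.} For $u=c\phi$, the second argument of $C_p$ vanishes by Step 1, and $\nabla|u|-\frac{|u|}{\phi}\nabla\phi=0$. Also $|\nabla u|=|\nabla|u||$, since $u$ has constant phase. Hence every remainder term vanishes.

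The main obstacle is purely algebraic: keeping track of signs and exponents in Step 2, namely verifying $\beta+2-N=-\alpha$ and $\alpha\beta=C_{N,p}$. I would double-check these before assembling the identity.
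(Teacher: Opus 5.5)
Your proposal is correct and follows the paper's proof essentially verbatim. You specialize Theorem \ref{thm rellich} with $\sigma=\mathcal{I}_N$, $V=1$, $\phi=|x|^{-\alpha}$, compute $\Delta\phi$, $|\Delta\phi|^{p-2}\Delta\phi$ and its Laplacian to get $\lambda W=C_{N,p}^{p}|x|^{-2p}$, and substitute the ratios $\nabla\phi/\phi$, $\Delta\phi/\phi$ and $|\Delta\phi|^{p-2}\Delta\phi/\phi^{p-1}$. Two of your additions are correct refinements: the separate treatment of the endpoint $p=N/2$ (where $\lambda=0$, a case the paper's proof tacitly excludes by assuming $N>2p$), and reading the $C_p$ argument as $\Delta u+\frac{C_{N,p}}{|x|^2}u$.
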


\begin{rem}
After dropping the remainder terms, we obtain the $L^{p}$-Rellich inequality with a sharp constant:
\begin{align*}
\int_{\mathbb{R}^N}|\Delta u|^{p}dx\geq \left(\frac{N(p-1)(N-2p)}{p^{2}}\right)^{p}\int_{\mathbb{R}^N}\frac{|u|^{p}}{|x|^{2p}}dx.
\end{align*}
\end{rem}

The case $p=2$ of (\ref{rellich identity}) also seems to be new.

\begin{cor}
Let $N\geq 4$. Then, for all complex-valued $u\in C_{0}^{\infty}(\mathbb{R}^{N}\backslash\{0\})$, we have
\begin{multline}\label{rellich l2}
\int_{\mathbb{R}^N}|\Delta u|^{2}dx=C^{2}_{N}\int_{\mathbb{R}^N}\frac{|u|^{2}}{|x|^{4}}dx+\int_{\mathbb{R}^N}\left|\Delta u+\frac{C_{N}}{|x|^{2}}\right|^{2}dx
\\+2C_{N}\int_{\mathbb{R}^N}\frac{1}{|x|^{2}}\left|\nabla|u|+\frac{N-4}{4}|u|\frac{x}{|x|^{2}}\right|^{2}dx
\\+2C_{N}\int_{\mathbb{R}^N}\frac{|\nabla u|^{2}-|\nabla |u||^{2}}{|x|^{2}}dx,
\end{multline}
where $C_{N}=\frac{N(N-4)}{4}$. Moreover, the functions $u=c|x|^{-\frac{N-4}{2}}$ are \say{virtual} extremizers of (\ref{rellich l2}) for all $c\in\mathbb{C}$.
\end{cor}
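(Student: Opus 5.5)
The plan is to read this off directly as the case $p=2$ of Corollary \ref{cor rellich classic}, or equivalently of Theorem \ref{thm rellich}. The only nontrivial simplification is the explicit form of the $C_2$-functional. The hypothesis $1<p\le \frac N2$ with $p=2$ is exactly $N\ge4$. I will also re-derive the needed ingredients from Theorem \ref{thm rellich} with $\sigma=\mathcal{I}_N$, $V=1$, $\Omega=\mathbb{R}^N\setminus\{0\}$ and $\phi=|x|^{-\alpha}$, $\alpha=\frac{N-4}{2}$, so that the constants can be checked independently.

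\textbf{Verifying the hypotheses of Theorem \ref{thm rellich}.} I use the radial formula $\Delta |x|^{-\beta}=\beta(\beta+2-N)|x|^{-\beta-2}$ on $\mathbb{R}^N\setminus\{0\}$.

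First, with $\beta=\alpha$,
\[
\Delta\phi=-\alpha(N-2-\alpha)|x|^{-\alpha-2}=-C_N|x|^{-2}\phi ,
\]
since $\alpha(N-2-\alpha)=\frac{N-4}{2}\cdot\frac N2=\frac{N(N-4)}{4}=C_N$. Hence $-\frac{\Delta\phi}{\phi}=\frac{C_N}{|x|^2}\ge0$, using $N\ge4$.

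Second, applying the formula again with $\beta=\alpha+2=\frac N2$ gives $\beta(\beta+2-N)=-C_N$. Therefore
\[
\Delta(\Delta\phi)=C_N^2|x|^{-4}\phi .
\]
So the equation in Theorem \ref{thm rellich} holds, classically and hence distributionally, with $p=2$, $\lambda=1$ and $W=C_N^2|x|^{-4}\in L^1_{loc}(\mathbb{R}^N\setminus\{0\})$. Also $\phi\in C^2(\mathbb{R}^N\setminus\{0\})$.

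\textbf{Simplifying each term of \eqref{rellich 1 iden} at $p=2$.}
\begin{itemize}[leftmargin=*]
\item The key algebraic fact is $|\xi|^2=|\xi-\eta|^2+2\,\mathrm{Re}(\xi-\eta)\cdot\overline{\eta}+|\eta|^2$, so by Definition \ref{cp-def} we have $C_2(\xi,\eta)=|\eta|^2$. With $\eta=\Delta u-\frac{\Delta\phi}{\phi}u=\Delta u+\frac{C_N}{|x|^2}u$, the $C_2$-term becomes $\int\bigl|\Delta u+\frac{C_N}{|x|^2}u\bigr|^2dx$. The factor $u$ is evidently intended in the displayed statement.
\item In the two remaining integrals, $|u|^{p-2}=1$ and $\frac{|\Delta\phi|^{p-2}\Delta\phi}{|\phi|^{p-2}\phi}=\frac{\Delta\phi}{\phi}=-\frac{C_N}{|x|^2}$. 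The minus signs in \eqref{rellich 1 iden} therefore turn into plus signs, with prefactors $p(p-1)=2$ and $p=2$.
\item Since $\frac{\nabla\phi}{\phi}=-\alpha\frac{x}{|x|^2}$, the gradient term becomes $\bigl|\nabla|u|+\frac{N-4}{2}|u|\frac{x}{|x|^2}\bigr|^2$. The coefficient is $\alpha=\frac{N-2p}{p}\big|_{p=2}$, matching Corollary \ref{cor rellich classic}.
\end{itemize}
Collecting these gives \eqref{rellich l2}.

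\textbf{Virtual extremizers.} For $u=c\phi$ we have $\Delta u+\frac{C_N}{|x|^2}u=c\left(\Delta\phi+\frac{C_N}{|x|^2}\phi\right)=0$. Also $\nabla|u|=|c|\nabla\phi$, which equals $|u|\frac{\nabla\phi}{\phi}$, and $|\nabla u|=|\nabla|u||$. So all three remainder integrands vanish.

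\textbf{Main obstacle.} There is no real analytic difficulty here. The only place needing care is bookkeeping of signs and constants: the sign of $\Delta\phi/\phi$, and the value $\alpha=\frac{N-4}{2}$ rather than $\frac{N-4}{4}$, which appears to be a typo in the displayed statement.
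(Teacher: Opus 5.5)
Your proposal is correct and follows the paper's route. The paper gives no separate argument for this corollary; it is the $p=2$ case of Corollary \ref{cor rellich classic}, that is, Theorem \ref{thm rellich} with $V=1$ and $\phi=|x|^{-\frac{N-4}{2}}$, where $C_2(\xi,\eta)=|\eta|^2$ reduces the $C_p$-term to a square. Both of your corrections to the displayed statement are also right: the $C_2$-term should read $\bigl|\Delta u+\frac{C_N}{|x|^2}u\bigr|^2$, and the coefficient should be $\frac{N-4}{2}$ rather than $\frac{N-4}{4}$, as the vanishing of the remainders at $u=c|x|^{-\frac{N-4}{2}}$ requires.
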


\begin{rem}
In the same spirit, we obtain the classical $L^{2}$-Rellich inequality as a consequence of (\ref{rellich l2}):
\begin{align*}
\int_{\mathbb{R}^N}|\Delta u|^{2}dx\geq\left(\frac{N(N-4)}{4}\right)^{2}\int_{\mathbb{R}^N}\frac{|u|^{2}}{|x|^{4}}dx.
\end{align*}
\end{rem}

\begin{rem}
The above results are almost identical in structure to the general case for the $\mathcal{L}$ operator. However, we state the Euclidean ones for simplicity and to emphasize the novelty even for the standard Laplacian $\Delta$ in the case of the Rellich inequality.
\end{rem}

\subsection{Extension of Cossetti and D’Arca results and more.} As \cite[Applications I-III]{d2025unified} are derived from \cite[Theorems 1.1 and 1.2]{d2025unified}, the validity of those results for all $1<p<\infty$ directly implies that each of the subsequent applications also carries over to the entire range of $p\in(1,\infty)$. 

Let $d: \Omega \to \mathbb{R}$ be a non-negative and continuous function such that $\nabla_{\mathcal{L}}d \neq 0$ a.e. in $\Omega$ with the following properties:

\begin{enumerate}
    \item There exists $x_{0} \in \Omega$ such that $d \in C^{\infty}(\Omega \setminus \{x_{0}\})$.
    
    \item $r := \inf_{x \in \Omega} d(x)$ and $R := \sup_{x \in \Omega} d(x)$, where $R$ can take the value $+\infty$.
    
    \item $V(d) \geq 0$ and $W(d)$ are radial weights defined on the interval $I = (r, R)$.
\end{enumerate}

Additionally, we assume \cite[H.0-H.3]{d2025unified}. Under these assumptions, we have the following Corollary as a direct implication of Theorem \ref{thm1} (see, \cite[Section 3]{d2025unified} for more details).

\begin{cor}\label{cor 1}
Let $1<p<\infty$ and $\phi(d)$ be a solution to (\ref{Z eq}). Then, for all complex-valued $u\in C_{0}^{\infty}(\Omega)$, we have
\begin{multline}\label{cor 1 eq}
\int_{\Omega} V(d)\left|\nabla_{\mathcal{L}} u \cdot \frac{\nabla_{\mathcal{L}} d}{\left|\nabla_{\mathcal{L}} d\right|}\right|^{p} d x = \lambda \int_{\Omega} W(d)|u|^{p}\left|\nabla_{\mathcal{L}} d\right|^{p} d x \\+ \int_{\Omega}C_{p}\left(V^{\frac{1}{p}}\nabla_{\mathcal{L}}u\cdot\frac{\nabla_{\mathcal{L}} d}{\left|\nabla_{\mathcal{L}} d\right|},V^{\frac{1}{p}}\phi(d)\nabla_{\mathcal{L}}\left(\frac{u}{\phi(d)}\right)\cdot\frac{\nabla_{\mathcal{L}} d}{\left|\nabla_{\mathcal{L}} d\right|}\right)dx.
\end{multline}
Moreover, the functions $u=c\phi(d)$ are \say{virtual} extremizers of (\ref{cor 1 eq}) for all $c\in\mathbb{C}$ and $p\geq 2$.
\end{cor}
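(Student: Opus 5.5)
The plan is to obtain Corollary \ref{cor 1} by applying Theorem \ref{thm1} with the specific choice
\begin{align*}
Z=\frac{\nabla_{\mathcal{L}}d}{|\nabla_{\mathcal{L}}d|}, \qquad V=V(d), \qquad W\ \text{replaced by}\ W(d)|\nabla_{\mathcal{L}}d|^{p}, \qquad \phi=\phi(d).
\end{align*}
The first step is to check that the hypotheses of Theorem \ref{thm1} hold under \cite[H.0--H.3]{d2025unified}.

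\begin{itemize}
\item $Z$ is well defined a.e., since $\nabla_{\mathcal{L}}d\neq0$ a.e., and $|Z|=1$.
\item Hence $V(d)|Z|^{p}=V(d)\in L^{1}_{loc}(\Omega)$.
\item $W(d)|\nabla_{\mathcal{L}}d|^{p}\in L^{1}_{loc}(\Omega)$, because $d$ is smooth away from $x_0$ and the hypotheses control the behaviour near $x_0$.
\item $\phi(d)\in C^{1}(\Omega)$, by \cite[H.0--H.3]{d2025unified}.
\end{itemize}

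The second step is to see that equation (\ref{Z eq}), with these choices, is exactly the equation $\phi(d)$ is assumed to solve. By the chain rule, $\nabla_{\mathcal{L}}\phi(d)=\phi'(d)\nabla_{\mathcal{L}}d$, so
\begin{align*}
\nabla_{\mathcal{L}}\phi\cdot Z=\phi'(d)|\nabla_{\mathcal{L}}d|.
\end{align*}
The left side of (\ref{Z eq}) therefore becomes
\begin{align*}
-\mathrm{div}_{\mathcal{L}}\bigl(V(d)|\phi'(d)|^{p-2}\phi'(d)|\nabla_{\mathcal{L}}d|^{p-2}\nabla_{\mathcal{L}}d\bigr).
\end{align*}
This is the one-dimensional (radial in $d$) form treated in \cite[Section 3]{d2025unified}, with right side $\lambda W(d)|\nabla_{\mathcal{L}}d|^{p}|\phi|^{p-2}\phi$. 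I take "$\phi(d)$ solves (\ref{Z eq})" to mean precisely this distributional identity, so no extra computation is needed beyond matching notation.

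The third step is to substitute into (\ref{hardy 1 iden}). This directly gives (\ref{cor 1 eq}): the first integral matches, the potential term becomes $\lambda\int W(d)|u|^{p}|\nabla_{\mathcal{L}}d|^{p}dx$, and the $C_p$ remainder has the stated arguments.

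For the virtual extremizers with $p\geq2$, take $u=c\phi(d)$. Then $\nabla_{\mathcal{L}}(u/\phi)=0$, so the second argument of $C_p$ vanishes. Since $C_p(\xi,0)=|\xi|^p-|\xi|^p-0=0$, the remainder is zero. Conversely, Lemma \ref{lem1} gives $C_p(\xi,\eta)\ge c_1(p)|\eta|^p$, so vanishing of the remainder forces $\phi\nabla_{\mathcal{L}}(u/\phi)\cdot Z=0$ a.e. where $V>0$. This is the same "virtual" characterization as in Theorem \ref{thm1}, which we inherit.

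The main obstacle is purely bookkeeping: making sure the local integrability and regularity requirements of Theorem \ref{thm1} really follow from \cite[H.0--H.3]{d2025unified}, particularly near $x_0$ and where $\nabla_{\mathcal{L}}d$ may vanish on a null set. I would handle this by citing the corresponding verification in \cite[Section 3]{d2025unified}, which uses only these hypotheses and not the restriction $p\geq2$. That restriction entered their argument only through the algebraic identities now replaced by (\ref{cp-eq}).
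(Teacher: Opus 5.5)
Your proposal is correct and follows the paper's route: the paper presents Corollary \ref{cor 1} as a direct specialization of Theorem \ref{thm1} with $Z=\nabla_{\mathcal{L}}d/|\nabla_{\mathcal{L}}d|$, $V=V(d)$, $W$ replaced by $W(d)|\nabla_{\mathcal{L}}d|^{p}$ and $\phi=\phi(d)$, and, like you, it defers the regularity and local-integrability checks to \cite[Section 3]{d2025unified}. Your observation that $u=c\phi(d)$ kills the remainder because $C_p(\xi,0)=0$ is the same argument the paper gives for the virtual extremizers in Theorem \ref{thm1}.
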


\begin{rem}
After dropping the remainder term and restricting $p\geq2$, in (\ref{cor 1 eq}), we obtain \cite[Theorem 3.1]{d2025unified}:
\begin{align}\label{d'arca 1st cor}
\int_{\Omega} V(d)\left|\nabla_{\mathcal{L}} u \cdot \frac{\nabla_{\mathcal{L}} d}{\left|\nabla_{\mathcal{L}} d\right|}\right|^{p} d x \geq \lambda \int_{\Omega} W(d)|u|^{p}\left|\nabla_{\mathcal{L}} d\right|^{p} d x.
\end{align}
As we said initially, the authors, in \cite[Section 3]{d2025unified}, then apply (\ref{d'arca 1st cor}) to derive both old and new inequalities \cite[Section 3]{d2025unified}. The identity (\ref{cor 1 eq}) provides an explicit sharp remainder term and extends all of the subsequent results in \cite[Section 3]{d2025unified} to the entire range of $1<p<\infty$. In fact, the same applies for \cite[Section 4-5]{d2025unified}. 
\end{rem}

There is also one more interesting application of Theorem \ref{thm2}, which is not present in \cite{d2025unified}. To be more precise, let us consider an open bounded set $\Omega$ of $\mathbb{R}^N$ and the following eigenvalue problem:
\begin{align}\label{p-laplacian quas}
\begin{cases} 
-\mathcal{L}_{p} u = \lambda|u|^{p-2} u & \text{in } \Omega, \\ 
u = 0 & \text{on } \partial \Omega,
\end{cases}
\end{align}
where any non-trivial weak solution to (\ref{p-laplacian quas}) is called an eigenfunction of the degenerate Dirichlet $p$-Laplacian $\mathcal{L}_{p}u=-\nabla^{*}_{\mathcal{L}}\cdot\left(|\nabla_{\mathcal{L}}u|^{p-2}\nabla_{\mathcal{L}}u\right)$ with a corresponding eigenvalue $\lambda\in \mathbb{R}$. We can also define the first eigenvalue as
\begin{align}\label{eigenvalue first quas}
\lambda_{1}(p, \Omega, \mathcal{L}) = \inf \left\{ \frac{\int_{\Omega} |\nabla_{\mathcal{L}} u|^{p} dx}{\int_{\Omega} |u|^{p} dx} : u \in W_{0}^{1,p}(\Omega) \setminus \{0\} \right\}.
\end{align}
and the associated eigenfunction we denote as $u_{1}$. Thus, having this in mind, choosing
\begin{align*}
V=W=1, \quad \lambda=\lambda_{1}(p, \Omega, \mathcal{L}) \quad \text{and} \quad \phi=u_{1}
\end{align*}
in Theorem \ref{thm2}, we get the following general identity:

\begin{cor}
Let $1<p<\infty$ and $\Omega$ be an open bounded subset of $\mathbb{R}^N$. Then, for all complex-valued $u\in C_{0}^{\infty}(\Omega)$, we have
\begin{align}\label{quas poincare}
\int_{\Omega}|\nabla_{\mathcal{L}}u|^{p}dx = \lambda_{1}(p,\Omega,\mathcal{L})\int_{\Omega}|u|^{p}+\int_{\Omega}C_{p}\left(\nabla_{\mathcal{L}}u,u_{1}\nabla_{\mathcal{L}}\left(\frac{u}{u_{1}}\right)\right)dx.
\end{align}
Moreover, the functions $u=cu_{1}$ are \say{virtual} extremizers of (\ref{quas poincare}) for all $c\in\mathbb{C}$.
\end{cor}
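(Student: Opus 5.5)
This corollary follows by specializing Theorem \ref{thm2} to $V\equiv W\equiv 1$, $\lambda=\lambda_1(p,\Omega,\mathcal{L})$ and $\phi=u_1$. The work consists in checking that this triple satisfies the hypotheses of Theorem \ref{thm2}. Once that is done, identity (\ref{hardy 2 iden}) with $V^{1/p}=1$ is literally (\ref{quas poincare}).

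First, the equation. By definition, $u_1$ is a weak solution of (\ref{p-laplacian quas}) with $\lambda=\lambda_1$. Since $\mathcal{L}_p u=-\nabla^*_{\mathcal{L}}\cdot(|\nabla_{\mathcal{L}}u|^{p-2}\nabla_{\mathcal{L}}u)=\textnormal{div}_{\mathcal{L}}(|\nabla_{\mathcal{L}}u|^{p-2}\nabla_{\mathcal{L}}u)$, this means
\begin{align*}
-\textnormal{div}_{\mathcal{L}}\left(|\nabla_{\mathcal{L}}u_1|^{p-2}\nabla_{\mathcal{L}}u_1\right)=\lambda_1|u_1|^{p-2}u_1
\end{align*}
in the weak sense. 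Testing against $C_0^\infty(\Omega)$ functions, this is the distributional equation required in Theorem \ref{thm2} with $V=W=1$. Moreover $V,W\in L^1_{loc}(\Omega)$ trivially, $V\geq0$, and $\lambda_1>0$; the positivity of $\lambda_1$ is what I would take from (\ref{eigenvalue first quas}) as the standard Poincaré-type fact for bounded $\Omega$.

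Second, the regularity and sign of $\phi=u_1$. Theorem \ref{thm2} asks for a non-trivial real-valued $\phi\in C^1(\Omega)$. The quotient $u/\phi$ in the remainder must also make sense, so in practice $u_1$ should not vanish in $\Omega$. I would invoke the standard facts for the first eigenfunction: it can be chosen real-valued and, replacing $u_1$ by $|u_1|$ (which is also a minimizer of (\ref{eigenvalue first quas})), nonnegative. A Harnack inequality for $\mathcal{L}_p$ then gives $u_1>0$ in $\Omega$, and $C^{1,\alpha}$-type regularity gives $u_1\in C^1$. With $u_1>0$ and $C^1$, the proof of Theorem \ref{thm2} goes through verbatim: one expands $C_p$, integrates the cross term $p|\nabla_{\mathcal{L}}\phi|^{p-2}\nabla_{\mathcal{L}}\phi\cdot\nabla_{\mathcal{L}}(|u|^p/|\phi|^{p-2}\phi)$, and uses the equation with the admissible test function $|u|^p/u_1^{p-1}$, which is compactly supported since $u\in C_0^\infty(\Omega)$.

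The main obstacle is this regularity and positivity step. For a general degenerate $\mathcal{L}$, the $C^1$ regularity and the strong minimum principle are not automatic, so I would state them as standing assumptions in the setting of Hörmander-type vector fields where they are known. The virtual extremizers then follow as in Theorem \ref{thm2}: for $u=cu_1$ we have $\nabla_{\mathcal{L}}(u/u_1)=0$, and $C_p(\xi,0)=0$.
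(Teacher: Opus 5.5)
Your proposal is correct and takes the same route as the paper, which obtains the corollary simply by substituting $V=W=1$, $\lambda=\lambda_1(p,\Omega,\mathcal{L})$ and $\phi=u_1$ into Theorem \ref{thm2}. Your extra checks are left implicit in the paper: that $u_1$ can be taken positive and $C^1$, so that $u/u_1$ and the test function $|u|^p/u_1^{p-1}$ make sense. You are right that for a general degenerate $\mathcal{L}$ these properties have to be supplied as standing assumptions.
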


\begin{rem}
In the special case, identity (\ref{quas poincare}) recovers known $L^{p}$-Poincar\'e identities such as those in \cite{ozawa2020sharp, suragan2023sharp, apseit2025sharp} for the standard Euclidean and general Baouendi-Grushin cases.
\end{rem}

\section{Proofs of Theorems \ref{thm1} and \ref{thm2}}

This section is devoted to the proofs of Theorems \ref{thm1} and \ref{thm2}.

\begin{proof}[\textbf{Proof of Theorem \ref{thm1}}]
We know that
\begin{align*}
C_{p}(\xi,\eta)=|\xi|^{p}-|\xi-\eta|^{p}-p|\xi-\eta|^{p-2}\text{Re}\left( \xi-\eta \right) \cdot \overline{\eta}.
\end{align*}
for $1<p<\infty$ and $\xi,\eta \in \mathbb{C}^{n}$. Let $\xi:=f$ and $\eta:=f-g$. Then
\begin{align}\label{Cp in terms of f and g}
C_{p}(f,f-g)=|f|^{p}+(p-1)|g|^{p}-p\text{Re}|g|^{p-2}g\overline{f}. 
\end{align}
Now take $\Omega \subseteq \mathbb{R}^{N}$, complex-valued $u \in C_{0}^{\infty}(\Omega)$, $V(x)\geq 0$, $W(x)\in L_{loc}^{1}(\Omega)$, $Z:\Omega \to \mathbb{R}^{\ell}$ and a non-trivial real-valued $\phi \in C^{1}(\Omega)$. Define
\begin{align}\label{def of f and g}
f:=V^{\frac{1}{p}}\nabla_{\mathcal{L}}u\cdot Z \in L^{p}\left(\Omega\right) \quad \text{and} \quad g:=\frac{\nabla_{\mathcal{L}}\phi \cdot Z}{\phi}uV^{\frac{1}{p}}\in L^{p}(\Omega).
\end{align}
By direct computation, we get
\begin{align}
p \text{Re}\int_{\Omega}|g|^{p-2}g\overline{f}dx&= p\text{Re}\int_{\Omega}\frac{V|\nabla_{\mathcal{L}}\phi \cdot Z|^{p-2}\nabla_{\mathcal{L}}\phi \cdot Z}{|\phi|^{p-2}\phi}|u|^{p-2}u\overline{\nabla_{\mathcal{L}}u}\cdot Z dx \nonumber
\\&=\int_{\Omega}^{}\nabla_{\mathcal{L}}|u|^{p}\cdot\frac{V|\nabla_{\mathcal{L}}\phi \cdot Z|^{p-2}\nabla_{\mathcal{L}}\phi \cdot Z}{|\phi|^{p-2}\phi}Z dx \nonumber
\\&=\int_{\Omega}^{}\nabla_{\mathcal{L}}\left( \frac{|u|^{p}}{|\phi|^{p-2}\phi} \right)\cdot \left( V|\nabla_{\mathcal{L}}\phi \cdot Z|^{p-2}\nabla_{\mathcal{L}}\phi \cdot Z \right)Z dx \nonumber
\\& \quad + (p-1)\int_{\Omega}^{}|u|^{p}\frac{|\nabla_{\mathcal{L}}\phi \cdot Z|^{p}}{|\phi|^{p}}V dx. \label{ggf}
\end{align}
Let us assume that there exists a non-trivial real-valued $\phi \in C^{1}(\Omega)$ that solves the following equation in the distributional sense in $\Omega$:
\begin{align*}
-\text{div}_{\mathcal{L}}\left( V|\nabla_{\mathcal{L}}\phi\cdot Z| \left( \nabla_{\mathcal{L}}\phi \cdot Z \right)Z \right) = 
\lambda W|\phi|^{p-2}\phi,
\end{align*}
which is equivalent to
\begin{align}\label{weak sense}
\int_{\Omega}^{}V|\nabla_{\mathcal{L}}\phi \cdot Z|^{p-2}\left( \nabla_{\mathcal{L}}\phi \cdot Z \right) \left( \nabla_{\mathcal{L}}\varphi \cdot Z \right) dx = \lambda\int_{\Omega}^{}W|\phi|^{p-2}\phi \ \varphi dx 
\end{align}
for all $\varphi \in C^{\infty}_{c}(\Omega)$. Substituting $\varphi=\frac{|u|^{p}}{|\phi|^{p-2}\phi}$ to (\ref{weak sense}), we have that
\begin{align}
\int_{\Omega}^{}V|\nabla_{\mathcal{L}}\phi \cdot Z|^{p-2}\left( \nabla_{\mathcal{L}}\phi \cdot Z \right) \left( \nabla_{\mathcal{L}}\left( \frac{|u|^{p}}{|\phi|^{p-2}\phi} \right)\cdot Z  \right) dx =\lambda\int_{\Omega}^{}W|u|^{p} dx. \label{using dist}
\end{align}
Using (\ref{using dist}) in (\ref{ggf}), we get
\begin{align}
p \text{Re}\int_{\Omega}|g|^{p-2}g\overline{f}dx=\lambda\int_{\Omega}^{}W|u|^{p} dx+(p-1)\int_{\Omega}^{}|u|^{p}\frac{|\nabla_{\mathcal{L}}\phi \cdot Z|^{p}}{|\phi|^{p}}V dx. \label{last step thm}
\end{align}
Now let us integrate (\ref{Cp in terms of f and g}) over $\Omega$:
\begin{align}
\int_{\Omega}C_{p}(f,f-g) dx=\int_{\Omega}|f|^{p} dx + (p-1)\int_{\Omega}|g|^{p} dx - p\text{Re}\int_{\Omega}|g|^{p-2}g\overline{f} dx. \label{last last step}
\end{align}
Putting (\ref{def of f and g}) and (\ref{last step thm}) to (\ref{last last step}), we obtain
\begin{multline*}
\int_{\Omega}C_p\left(V^{\frac{1}{p}}\nabla_{\mathcal{L}}u\cdot Z,V^{\frac{1}{p}}\nabla_{\mathcal{L}}u\cdot Z-\frac{\nabla_{\mathcal{L}}\phi \cdot Z}{\phi}uV^{\frac{1}{p}}\right)dx=\int_{\Omega}V|\nabla_{\mathcal{L}}u\cdot Z|^p dx \\+ (p-1)\int_{\Omega}|u|^p\frac{|\nabla_{\mathcal{L}}\phi\cdot Z|^p}{|\phi|^p}V dx  - \lambda\int_{\Omega}W|u|^{p} dx - (p-1)\int_{\Omega}|u|^{p}\frac{|\nabla_{\mathcal{L}}\phi \cdot Z|^p}{|\phi|^p}V dx
\\ = \int_{\Omega}V|\nabla_{\mathcal{L}}u\cdot Z|^p dx - \lambda\int_{\Omega}W|u|^{p} dx.
\end{multline*}
Dropping the remainder term gives us
\begin{align}\label{ineq}
\int_{\Omega}V|\nabla_{\mathcal{L}}u\cdot Z|^p dx \geq \lambda\int_{\Omega}W|u|^{p} dx.
\end{align}
Finally, since
\begin{align*}
C_{p}(\xi,\eta)=0 \iff \eta=f-g=0
\end{align*}
and
\begin{align*}
f-g=V^{\frac{1}{p}}\nabla_{\mathcal{L}}u\cdot Z-\frac{\nabla_{\mathcal{L}}\phi \cdot Z}{\phi}uV^{\frac{1}{p}}=V^{\frac{1}{p}}\phi\nabla_{\mathcal{L}}\left(\frac{u}{\phi}\right)\cdot Z=0,
\end{align*}
we have that the remainder term vanishes (or that the inequality (\ref{ineq}) achieves equality) if and only if $u=c\phi$ for any $c\in\mathbb{C}$.
\end{proof}

\begin{proof}[\textbf{Proof of Theorem \ref{thm2}}]
Let $u$ and $V,\phi$ be as in the assumptions of Theorem \ref{thm2}. Taking 
\begin{align}\label{new fg}
f:=V^{\frac{1}{p}}\nabla_{\mathcal{L}}u \in L^{p}\left(\Omega\right) \quad \text{and} \quad g:=\frac{\nabla_{\mathcal{L}}\phi }{\phi}uV^{\frac{1}{p}}\in L^{p}(\Omega)
\end{align}
and proceeding in the same way as in the proof of Theorem \ref{thm1}, we get
\begin{align*}
\int_{\Omega} C_{p}\!\left(
V^{\frac{1}{p}}\nabla_{\mathcal{L}}u,
V^{\frac{1}{p}}\nabla_{\mathcal{L}}u
-\frac{\nabla_{\mathcal{L}}\phi}{\phi}u V^{\frac{1}{p}}
\right)dx
&=  \int_{\Omega}V|\nabla_{\mathcal{L}}u|^{p}dx 
- \lambda\int_{\Omega}W|u|^{p}dx.
\end{align*}
Since $C_{p}\ge 0$, we get the inequality
\begin{align}\label{ineq thm2}
\int_{\Omega}V|\nabla_{\mathcal{L}}u|^{p}dx \ge \lambda\int_{\Omega}W|u|^{p}dx.
\end{align}
Finally, equality in \eqref{ineq thm2} holds iff
\[
V^{\frac{1}{p}}\nabla_{\mathcal{L}}u
-\frac{\nabla_{\mathcal{L}}\phi}{\phi}u V^{\frac{1}{p}} = 0.
\]
That is,
\[
V^{\frac{1}{p}}\phi\,\nabla_{\mathcal{L}}\!\left(\frac{u}{\phi}\right) = 0,
\]
which gives $u=c\phi$ for some constant $c\in\mathbb{C}$.
\end{proof}

\section{Proofs of Proposition \ref{prop rellich}, Theorem \ref{thm rellich} and Corollary \ref{cor rellich classic}}

First, let us provide the proof of Proposition \ref{prop rellich}. Then, we will proceed with Theorem \ref{thm rellich}.

\begin{proof}[\textbf{Proof of Proposition \ref{prop rellich}}]
We are given that $1<p<\infty$, $\Omega \subseteq \mathbb{R}^N$. Now let us take a complex-valued $u$ on $\Omega$ and a non-trivial real-valued $\phi\in C^{2}(\Omega)$. Then, by direct calculations, we get
\begin{align*}
\mathcal{L} \left( \frac{|u|^{p}}{|\phi|^{p-2}\phi} \right) = \mathcal{L}\left(v\Phi\right),
\end{align*}
where
\begin{align*}
v = |u|^{p} \quad \text{and} \quad \Phi = \frac{1}{|\phi|^{p-2}\phi}.
\end{align*}
Then, by the product rule, we get
\begin{align*}
\mathcal{L}(v\Phi) = \mathcal{L}(v)\Phi+v\mathcal{L}(\Phi)+2\nabla_{\mathcal{L}}v\cdot\nabla_{\mathcal{L}}\Phi.
\end{align*}
Let us calculate each part term by term. We start with $\mathcal{L}(v)\Phi$:
\begin{align*}
\mathcal{L}(v)\Phi = \mathcal{L}(|u|^{p})\Phi.
\end{align*}
To evaluate the rest, we note that
\begin{align*}
|\phi|^{p}\Phi = \phi.
\end{align*}
Therefore, we have
\begin{align*}
\mathcal{L}\left(|\phi|^{p}\Phi\right)&=\mathcal{L}(|\phi|^{p})\Phi+|\phi|^{p}\mathcal{L}(\Phi)+2\nabla_{\mathcal{L}}|\phi|^{p}\cdot\nabla_{\mathcal{L}}\Phi
\\&=\left[p(p-1)|\phi|^{p-2}|\nabla_{\mathcal{L}} \phi|^{2}+p|\phi|^{p-2}\phi\mathcal{L}\phi\right]\Phi+|\phi|^{p}\mathcal{L}(\Phi)+2\nabla_{\mathcal{L}}|\phi|^{p}\cdot\nabla_{\mathcal{L}}\Phi
\\&=p(p-1)\frac{|\nabla_{\mathcal{L}}\phi|^2}{\phi}+p\mathcal{L}\phi+|\phi|^{p}\mathcal{L}(\Phi)+2\nabla_{\mathcal{L}}|\phi|^{p}\cdot\nabla_{\mathcal{L}}\Phi
\\&=p(p-1)\frac{|\nabla_{\mathcal{L}}\phi|^2}{\phi}+p\mathcal{L}\phi+|\phi|^{p}\mathcal{L}(\Phi)
\\& \quad + 2\left(p|\phi|^{p-2}\phi\nabla_{\mathcal{L}}\phi\right)\cdot\left((1-p)|\phi|^{-p}\nabla_{\mathcal{L}}\phi\right)
\\&=-p(p-1)\frac{|\nabla_{\mathcal{L}}\phi|^{2}}{\phi}+p\mathcal{L}\phi+|\phi|^{p}\mathcal{L}\Phi=\mathcal{L}\phi.
\end{align*}
Since $\phi\neq 0$, this gives that
\begin{align*}
\mathcal{L}\Phi = \Phi\left[-(p-1)\frac{\mathcal{L}\phi}{\phi}+(p-1)\frac{|\nabla_{\mathcal{L}}\phi|^{2}}{\phi^{2}}\right]
\end{align*}
Combining all pieces together, we get
\begin{align*}
\mathcal{L}\left(v\Phi\right) &= \mathcal{L}(|u|^p)\Phi + |u|^p\Phi \left[ -(p-1)\frac{\mathcal{L}\phi}{\phi} + p(p-1)\frac{|\nabla_{\mathcal{L}}\phi|^2}{\phi^2} \right] 
\\& \quad - 2(p-1)\Phi \frac{\nabla_{\mathcal{L}}(|u|^p) \cdot \nabla_{\mathcal{L}}\phi}{\phi},
\end{align*}
which after factoring $\Phi$, gives us (\ref{laplace u/phi}).
\end{proof}

\begin{proof}[\textbf{Proof of Theorem \ref{thm rellich}}]
Let us recall the identity (\ref{laplace u/phi}):
\begin{multline*}
\mathcal{L} \left( \frac{|u|^{p}}{|\phi|^{p-2}\phi} \right) = \frac{1}{|\phi|^{p-2}\phi}\biggl[  \mathcal{L}(|u|^{p})
 - (p-1)\frac{\mathcal{L}\phi}{\phi}\,|u|^{p}
\\ - 2(p-1)\frac{\nabla_{\mathcal{L}}(|u|^{p})\cdot \nabla_{\mathcal{L}}\phi}{\phi}
 + p(p-1)\frac{|\nabla_{\mathcal{L}}\phi|^{2}}{\phi^{2}}\,|u|^{p} \biggr] .
\end{multline*}
Multiplying by $|\mathcal{L} \phi|^{p-2}\mathcal{L} \phi$, gives us
\begin{align*}
&\mathcal{L}\!\left(\frac{|u|^{p}}{|\phi|^{p-2}\phi}\right)\, |\mathcal{L}\phi|^{p-2}\mathcal{L}\phi \nonumber
\\& \quad = 
\frac{|\mathcal{L}\phi|^{p-2}\mathcal{L}\phi}{|\phi|^{p-2}\phi}\,\mathcal{L}(|u|^{p})
- (p-1)\frac{|\mathcal{L}\phi|^{p-2}\mathcal{L}\phi}{|\phi|^{p-2}\phi}\,
\frac{\mathcal{L} \phi}{\phi}\,|u|^{p} \nonumber
\\& \quad \quad - 2(p-1)\frac{|\mathcal{L}\phi|^{p-2}\mathcal{L}\phi}{|\phi|^{p-2}\phi}\,
\frac{\nabla_{\mathcal{L}}(|u|^{p})\cdot \nabla_{\mathcal{L}}\phi}{\phi}
+ p(p-1)\frac{|\mathcal{L}\phi|^{p-2}\mathcal{L}\phi}{|\phi|^{p-2}\phi}\,
\frac{|\nabla_{\mathcal{L}}\phi|^{2}}{\phi^{2}}\,|u|^{p}. 
\end{align*}
Thus, we have
\begin{align}\label{multiply}
&\frac{|\mathcal{L}\phi|^{p-2}\mathcal{L}\phi}{|\phi|^{p-2}\phi}\,\mathcal{L}(|u|^{p}) \nonumber
\\& \quad = \mathcal{L}\!\left(\frac{|u|^{p}}{|\phi|^{p-2}\phi}\right)\, |\mathcal{L}\phi|^{p-2}\mathcal{L}\phi+(p-1)\frac{|\mathcal{L}\phi|^{p-2}\mathcal{L}\phi}{|\phi|^{p-2}\phi}\,
\frac{\mathcal{L} \phi}{\phi}\,|u|^{p} \nonumber
\\& \quad \quad + 2(p-1)\frac{|\mathcal{L}\phi|^{p-2}\mathcal{L}\phi}{|\phi|^{p-2}\phi}\,
\frac{\nabla_{\mathcal{L}}(|u|^{p})\cdot \nabla_{\mathcal{L}}\phi}{\phi} - p(p-1)\frac{|\mathcal{L}\phi|^{p-2}\mathcal{L}\phi}{|\phi|^{p-2}\phi}\,
\frac{|\nabla_{\mathcal{L}}\phi|^{2}}{\phi^{2}}\,|u|^{p}.
\end{align}
On the other hand, similarly as in the proofs of Theorem \ref{thm1} and \ref{thm2}, by direct calculation, we get 
\begin{align}\label{cp rellich}
&C_p\left(V^{\frac{1}{p}} \mathcal{L} u,V^{\frac{1}{p}}\mathcal{L} u - V^{\frac{1}{p}}\frac{\mathcal{L} \phi}{\phi}u \right) \nonumber
\\& \quad=V|\mathcal{L} u|^{p}
+ (p-1)\,V\frac{|\mathcal{L}\phi|^{p}}{|\phi|^{p}}\,|u|^{p} - p\operatorname{Re}\!\left(
 V\frac{|\mathcal{L}\phi|^{p-2}\mathcal{L}\phi}{|\phi|^{p-2}\phi}\,
 |u|^{p-2}u\,\overline{\mathcal{L} u}
  \right).
\end{align}
Using the identity
\begin{align*}
-p|u|^{p-2} \operatorname{Re}(u \overline{\mathcal{L} u}) = - \mathcal{L} (|u|^p) + p|u|^{p-2} |\nabla_{\mathcal{L}} u|^2 + p(p-2)|u|^{p-4} |\operatorname{Re}(u \overline{\nabla_{\mathcal{L}} u})|^2 
\end{align*}
in (\ref{cp rellich}), we get
\begin{align}\label{before sub cp}
&C_p\left(V^{\frac{1}{p}} \mathcal{L} u,V^{\frac{1}{p}}\mathcal{L} u - V^{\frac{1}{p}}\frac{\mathcal{L} \phi}{\phi}u \right) \nonumber
\\& \quad=V|\mathcal{L} u|^{p}
+ (p-1)\,V\frac{|\mathcal{L}\phi|^{p}}{|\phi|^{p}}\,|u|^{p}
-V\frac{|\mathcal{L} \phi|^{p-2}\mathcal{L} \phi}{|\phi|^{p-2}\phi}\mathcal{L}(|u|^{p}) \nonumber
\\& \quad \quad+pV\frac{|\mathcal{L} \phi|^{p-2}\mathcal{L} \phi}{|\phi|^{p-2}\phi}|u|^{p-2}|\nabla_{\mathcal{L}} u|^{2}
+p(p-2)V\frac{|\mathcal{L} \phi|^{p-2}\mathcal{L} \phi}{|\phi|^{p-2}\phi}|u|^{p-4}\left| \text{Re}\left( u\overline{\nabla_{\mathcal{L}} u} \right) \right|^{2} . 
\end{align}
Putting (\ref{multiply}) in (\ref{before sub cp}), we obtain
\begin{align*}
&C_p\left( V^{\frac{1}{p}}\mathcal{L} u,V^{\frac{1}{p}}\mathcal{L} u - V^{\frac{1}{p}}\frac{\mathcal{L} \phi}{\phi}u \right) \\& \quad=V|\mathcal{L} u|^{p}
+ (p-1)\,V\frac{|\mathcal{L}\phi|^{p}}{|\phi|^{p}}\,|u|^{p}
 - V\mathcal{L} \left( \frac{|u|^{p}}{|\phi|^{p-2}\phi} \right)|\mathcal{L} \phi|^{p-2}\mathcal{L} \phi 
 \\& \quad\quad-(p-1)V\frac{|\mathcal{L}\phi|^{p-2}\mathcal{L}\phi}{|\phi|^{p-2}\phi}\frac{\mathcal{L} \phi}{\phi}|u|^{p}
-2(p-1)V\frac{|\mathcal{L}\phi|^{p-2}\mathcal{L}\phi}{|\phi|^{p-2}\phi}\frac{\nabla_{\mathcal{L}} \left( |u|^{p} \right) \cdot \nabla_{\mathcal{L}} \phi}{\phi}
\\& \quad\quad+p(p-1)V\frac{|\mathcal{L}\phi|^{p-2}\mathcal{L}\phi}{|\phi|^{p-2}\phi}\frac{|\nabla_{\mathcal{L}} \phi|^{2}}{\phi^{2}}|u|^{p}
 +pV\frac{|\mathcal{L} \phi|^{p-2}\mathcal{L} \phi}{|\phi|^{p-2}\phi}|u|^{p-2}|\nabla_{\mathcal{L}} u|^{2}
\\& \quad\quad+p(p-2)V\frac{|\mathcal{L} \phi|^{p-2}\mathcal{L} \phi}{|\phi|^{p-2}\phi}|u|^{p-4}\left| \text{Re}\left( u\overline{\nabla_{\mathcal{L}} u} \right) \right|^{2} .
\end{align*}
Simplifying:
\begin{align*}
&C_p\left( V^{\frac{1}{p}}\mathcal{L} u,V^{\frac{1}{p}}\mathcal{L} u - V^{\frac{1}{p}}\frac{\mathcal{L} \phi}{\phi}u \right) \\&\quad=V|\mathcal{L} u|^{p}
- V\mathcal{L} \left( \frac{|u|^{p}}{|\phi|^{p-2}\phi} \right)|\mathcal{L} \phi|^{p-2}\mathcal{L} \phi -2(p-1)V\frac{|\mathcal{L}\phi|^{p-2}\mathcal{L}\phi}{|\phi|^{p-2}\phi}\frac{\nabla_{\mathcal{L}} \left( |u|^{p} \right) \cdot \nabla_{\mathcal{L}} \phi}{\phi}
\\& \quad\quad +p(p-1)V\frac{|\mathcal{L}\phi|^{p-2}\mathcal{L}\phi}{|\phi|^{p-2}\phi}\frac{|\nabla_{\mathcal{L}} \phi|^{2}}{\phi^{2}}|u|^{p}
+pV\frac{|\mathcal{L} \phi|^{p-2}\mathcal{L} \phi}{|\phi|^{p-2}\phi}|u|^{p-2}|\nabla_{\mathcal{L}} u|^{2}
\\& \quad \quad +p(p-2)V\frac{|\mathcal{L} \phi|^{p-2}\mathcal{L} \phi}{|\phi|^{p-2}\phi}|u|^{p-4}\left| \text{Re}\left( u\overline{\nabla_{\mathcal{L}} u} \right) \right|^{2} .
\end{align*}
Let $G:=V\frac{|\mathcal{L} \phi|^{p-2}\mathcal{L} \phi}{|\phi|^{p-2}\phi}$. Since
\begin{align*}
\text{Re}\left( u\overline{\nabla_{\mathcal{L}} u} \right)=|u|\nabla_{\mathcal{L}} (|u|) \quad \text{and} \quad \nabla_{\mathcal{L}}|u|^{p}=p|u|^{p-2}\text{Re}\left( u\overline{\nabla_{\mathcal{L}} u} \right) =p|u|^{p-2}|u|\nabla_{\mathcal{L}} (|u|),
\end{align*}
we get
\begin{align*}
&C_p\left( V^{\frac{1}{p}}\mathcal{L} u,V^{\frac{1}{p}}\mathcal{L} u - V^{\frac{1}{p}}\frac{\mathcal{L} \phi}{\phi}u \right) \\&\quad= V|\mathcal{L} u|^{p}
- V\mathcal{L} \left( \frac{|u|^{p}}{|\phi|^{p-2}\phi} \right)|\mathcal{L} \phi|^{p-2}\mathcal{L} \phi  + p(p-1)G\frac{|\nabla_{\mathcal{L}} \phi|^{2}}{\phi^{2}}|u|^{p}
\\& \quad\quad-2p(p-1)G|u|^{p-2}\left( \nabla_{\mathcal{L}} |u|\cdot \frac{|u|}{\phi}\nabla_{\mathcal{L}} \phi \right) 
+pG|u|^{p-2} |\nabla_{\mathcal{L}} u|^{2}
\\&\quad\quad+p(p-2)G|u|^{p-2}\left| \nabla_{\mathcal{L}}|u| \right|^{2}
\\&\quad=V|\mathcal{L} u|^{p}
- V\mathcal{L} \left( \frac{|u|^{p}}{|\phi|^{p-2}\phi} \right)|\mathcal{L} \phi|^{p-2}\mathcal{L} \phi + p(p-1)G\frac{|\nabla_{\mathcal{L}} \phi|^{2}}{\phi^{2}}|u|^{p}
\\& \quad\quad -2p(p-1)G|u|^{p-2}\left( \nabla_{\mathcal{L}} |u|\cdot \frac{|u|}{\phi}\nabla_{\mathcal{L}} \phi \right) 
+ pG|u|^{p-2}\left| \nabla_{\mathcal{L}}|u| \right| ^{2}
\\&\quad\quad+pG|u|^{p-2}\left( |\nabla_{\mathcal{L}} u|^{2}-|\nabla_{\mathcal{L}} |u||^{2} \right)
+ p(p-2)G|u|^{p-2}|\nabla_{\mathcal{L}} |u||^{2}
\\&\quad = V|\mathcal{L} u|^{p}
- V\mathcal{L} \left( \frac{|u|^{p}}{|\phi|^{p-2}\phi} \right)|\mathcal{L} \phi|^{p-2}\mathcal{L} \phi + p(p-1)G\frac{|\nabla_{\mathcal{L}} \phi|^{2}}{\phi^{2}}|u|^{p}
\\& \quad\quad -2p(p-1)G|u|^{p-2}\left( \nabla_{\mathcal{L}} |u|\cdot \frac{|u|}{\phi}\nabla_{\mathcal{L}} \phi \right)
+p(p-1)G|u|^{p-2}|\nabla_{\mathcal{L}} |u||^{2}
\\& \quad\quad+pG|u|^{p-2}\left( |\nabla_{\mathcal{L}} u|^{2}-|\nabla_{\mathcal{L}} |u||^{2} \right)
\\&\quad=V|\mathcal{L} u|^{p}
- V\mathcal{L} \left( \frac{|u|^{p}}{|\phi|^{p-2}\phi} \right)|\mathcal{L} \phi|^{p-2}\mathcal{L} \phi
\\& \quad\quad +p(p-1)G|u|^{p-2}\left[ |\nabla_{\mathcal{L}} |u||^{2}-2\nabla_{\mathcal{L}}|u|\cdot \frac{|u|}{\phi}\nabla_{\mathcal{L}} \phi + \frac{|u|^{2}}{\phi^{2}}|\nabla_{\mathcal{L}} \phi|^{2} \right] \\&\quad\quad+pG|u|^{p-2}\left( |\nabla_{\mathcal{L}} u|^{2}-|\nabla_{\mathcal{L}} |u||^{2} \right)
\\&\quad=V|\mathcal{L} u|^{p}
- V\mathcal{L} \left( \frac{|u|^{p}}{|\phi|^{p-2}\phi} \right)|\mathcal{L} \phi|^{p-2}\mathcal{L} \phi+p(p-1)G|u|^{p-2}\left| \nabla_{\mathcal{L}}|u|-\frac{|u|}{\phi}\nabla_{\mathcal{L}} \phi \right| ^{2}
\\& \quad\quad  +pG|u|^{p-2}\left( |\nabla_{\mathcal{L}} u|^{2}-|\nabla_{\mathcal{L}} |u||^{2} \right).
\end{align*}
Making the change of variables $G=V\frac{|\mathcal{L} \phi|^{p-2}\mathcal{L} \phi}{|\phi|^{p-2}\phi}$, we have
\begin{align*}
&C_p\left( V^{\frac{1}{p}}\mathcal{L} u,V^{\frac{1}{p}}\mathcal{L} u - V^{\frac{1}{p}}\frac{\mathcal{L} \phi}{\phi}u \right)\\&\quad=V|\mathcal{L} u|^{p}
- V\mathcal{L} \left( \frac{|u|^{p}}{|\phi|^{p-2}\phi} \right)|\mathcal{L} \phi|^{p-2}\mathcal{L} \phi
\\&\quad\quad+p(p-1)V\frac{|\mathcal{L} \phi|^{p-2}\mathcal{L} \phi}{|\phi|^{p-2}\phi}|u|^{p-2}\left| \nabla_{\mathcal{L}}|u|-\frac{|u|}{\phi}\nabla_{\mathcal{L}} \phi \right| ^{2}
\\&\quad\quad+pV\frac{|\mathcal{L} \phi|^{p-2}\mathcal{L} \phi}{|\phi|^{p-2}\phi}|u|^{p-2}\left( |\nabla_{\mathcal{L}} u|^{2}-|\nabla_{\mathcal{L}} |u||^{2} \right).
\end{align*}
Integrating over $\Omega$, we obtain
\begin{align*}
&\int_{\Omega} C_p\left( V^{\frac{1}{p}}\mathcal{L} u,V^{\frac{1}{p}}\mathcal{L} u - V^{\frac{1}{p}}\frac{\mathcal{L} \phi}{\phi}u \right) dx \\&\quad= \int_{\Omega}V|\mathcal{L} u|^{p} \, dx - \int_{\Omega} V\mathcal{L} \left( \frac{|u|^{p}}{|\phi|^{p-2}\phi} \right)|\mathcal{L} \phi|^{p-2}\mathcal{L} \phi dx \\
& \quad\quad + p(p-1) \int_{\Omega} V\frac{|\mathcal{L} \phi|^{p-2}\mathcal{L} \phi}{|\phi|^{p-2}\phi}|u|^{p-2}\left| \nabla_{\mathcal{L}}|u|-\frac{|u|}{\phi}\nabla_{\mathcal{L}} \phi \right| ^{2}dx \\
& \quad\quad + p \int_{\Omega} V\frac{|\mathcal{L} \phi|^{p-2}\mathcal{L} \phi}{|\phi|^{p-2}\phi}|u|^{p-2}\left( |\nabla_{\mathcal{L}} u|^{2}-|\nabla_{\mathcal{L}} |u||^{2} \right) dx.
\end{align*}
Applying integration by parts and the fact that there exists a non-trivial real-valued $\phi \in C^{2}\left(\Omega\right)$ that solves the following equation in the distributional sense in $\Omega$:
\begin{align*}
\mathcal{L} \left( V|\mathcal{L} \phi|^{p-2}\mathcal{L} \phi \right)=\lambda W|\phi|^{p-2}\phi, 
\end{align*}
we obtain
\begin{align*}
&\int_{\Omega} C_p\left( V^{\frac{1}{p}}\mathcal{L} u,V^{\frac{1}{p}}\mathcal{L} u - V^{\frac{1}{p}}\frac{\mathcal{L} \phi}{\phi}u \right)dx \\&\quad= \int_{\Omega} V|\mathcal{L} u|^{p} dx -\lambda\int_{\Omega}^{}W|u|^{p}dx \\&\quad\quad+ p(p-1) \int_{\Omega} V\frac{|\mathcal{L} \phi|^{p-2}\mathcal{L} \phi}{|\phi|^{p-2}\phi}|u|^{p-2}\left| \nabla_{\mathcal{L}}|u|-\frac{|u|}{\phi}\nabla_{\mathcal{L}} \phi \right| ^{2}dx \\
& \quad\quad + p \int_{\Omega} V\frac{|\mathcal{L} \phi|^{p-2}\mathcal{L} \phi}{|\phi|^{p-2}\phi}|u|^{p-2}\left( |\nabla_{\mathcal{L}} u|^{2}-|\nabla_{\mathcal{L}} |u||^{2} \right)dx.
\end{align*}
Rewriting
\begin{align*}
&\int_{\Omega} V|\mathcal{L} u|^{p} dx \\&\quad=\lambda\int_{\Omega}^{}W|u|^{p}dx+\int_{\Omega} C_p\left( V^{\frac{1}{p}}\mathcal{L} u,V^{\frac{1}{p}}\mathcal{L} u - V^{\frac{1}{p}}\frac{\mathcal{L} \phi}{\phi}u \right) dx
\\&\quad\quad -  p(p-1) \int_{\Omega} V\frac{|\mathcal{L} \phi|^{p-2}\mathcal{L} \phi}{|\phi|^{p-2}\phi}|u|^{p-2}\left| \nabla_{\mathcal{L}}|u|-\frac{|u|}{\phi}\nabla_{\mathcal{L}} \phi \right| ^{2} dx \\
& \quad\quad - p \int_{\Omega} V\frac{|\mathcal{L} \phi|^{p-2}\mathcal{L} \phi}{|\phi|^{p-2}\phi}|u|^{p-2}\left( |\nabla_{\mathcal{L}} u|^{2}-|\nabla_{\mathcal{L}} |u||^{2} \right) dx.
\end{align*}
Finally, we set $-\frac{\mathcal{L} \phi}{\phi}>0$ and use the fact that $|\nabla_{\mathcal{L}} u|\geq|\nabla_{\mathcal{L}}|u||$ (see, \cite[Lemma 2.4]{ruzhansky2024hardy} and \cite[Theorem 2.1]{ruzhansky2019weighted2}) to complete the proof.
\end{proof}

\begin{proof}[\textbf{Proof of Corollary \ref{cor rellich classic}}]
We have that $\sigma = \mathcal{I}_{k}$, $V=1$ and $\phi=|x|^{-\alpha}$ with $\alpha=\frac{N-2p}{p}$. By direct calculations, we have
\begin{align*}
\phi'(r) = -\alpha r^{-\alpha-1},
\qquad
\phi''(r) = \alpha(\alpha+1) r^{-\alpha-2}.
\end{align*}
Hence,
\begin{align*}
\Delta \phi
= \phi'' + \frac{n-1}{r}\phi'
= \alpha(\alpha+1) r^{-\alpha-2}
+ (n-1)(-\alpha r^{-\alpha-2})
= \alpha(\alpha-n+2) r^{-\alpha-2}.
\end{align*}
Let us write this as follows:
\begin{align*}
\Delta \phi = c_1 r^{-\alpha-2},
\qquad
c_1 := \alpha(\alpha-n+2).
\end{align*}
Simplifying, we get
\begin{align*}
c_{1} = -A := \frac{n(p-1)(n-2p)}{p^2} > 0,
\qquad (n>2p).
\end{align*}
Since $\Delta \phi = -A r^{-\alpha-2}$ and $A>0$,
\begin{align*}
|\Delta \phi|^{p-2}\Delta \phi
= (A r^{-\alpha-2})^{p-2}(-A r^{-\alpha-2})
= -A^{p-1} r^{-(\alpha+2)(p-1)}.
\end{align*}
Setting
\begin{align*}
g(r) := |\Delta \phi|^{p-2}\Delta \phi
= K r^{-\beta},
\qquad
K := -A^{p-1},
\qquad
\beta := (\alpha+2)(p-1)=\frac{n(p-1)}{p}.
\end{align*}
The calculations for $\Delta g$ are the same as for $\Delta \phi$:
\begin{align*}
\Delta g = \beta(\beta-n+2) K r^{-\beta-2},
\end{align*}
which means that
\begin{align*}
\Delta\bigl(|\Delta\phi|^{p-2}\Delta\phi\bigr)
= \beta(\beta-n+2) K r^{-\beta-2}.
\end{align*}
Computing the constants, we obtain
\begin{align*}
\beta(\beta-n+2)K
= (-A)(-A^{p-1})
= A^p
\end{align*}
and thus
\begin{align*}
\Delta\bigl(|\Delta\phi|^{p-2}\Delta\phi\bigr)
= A^p r^{-\beta-2}.
\end{align*}
Since
\begin{align*}
-\beta-2 = -2p-\alpha(p-1)
\end{align*}
and
\begin{align*}
|\phi|^{p-2}\phi = r^{-\alpha(p-1)},
\end{align*}
we finally derive that
\begin{align*}
\Delta\bigl(|\Delta\phi|^{p-2}\Delta\phi\bigr)
= A^{p}|x|^{-2p} |\phi|^{p-2}\phi.
\end{align*}
Therefore,
\begin{align*}
\lambda W = A^{p}|x|^{-2p}.
\end{align*}
Other calculations follow directly:
\begin{align*}
\frac{\nabla \phi}{\phi}=-\alpha\frac{x}{|x|^{2}}, \quad \frac{\Delta \phi}{\phi}=-A\frac{1}{|x|^{2}}, \quad \frac{|\Delta \phi|^{p-2}\Delta\phi}{|\phi|^{p-2}\phi}=-A^{p-1}\frac{1}{|x|^{2(p-1)}}.
\end{align*}
Combining everything together, we get (\ref{rellich identity}).
\end{proof}

\bibliographystyle{alpha}
\bibliography{citation}

\end{document}